\numberwithin{equation}{section}
\renewcommand{\m}{\mathfrak{m}}
\newcommand{\RR}{\mathbb{R}}
\newcommand{\NN}{\mathbb{N}}
\renewcommand{\ZZ}{\mathbb{Z}}
\newcommand{\QQ}{\mathbb{Q}}
\renewcommand{\HK}{\operatorname{HK}}
\newcommand{\e}{\ee_{\HK}}
\newcommand{\ee}{\operatorname{e}}
\renewcommand{\Spec}{\operatorname{Spec}}
\newcommand{\IM}{\operatorname{Im}}
\renewcommand{\Proj}{\operatorname{Proj}}
\renewcommand{\rk}{\operatorname{rk}}
\renewcommand{\leq}{\leqslant}
\renewcommand{\geq}{\geqslant}
\newcommand{\OO}{\mathcal{O}}
\newcommand{\frk}{\mathrm{frk}}
\DeclareMathOperator{\vol}{vol}
\renewcommand{\b}{\mathfrak{b}}
\title{$F$-signature under birational morphisms}
\author{Linquan Ma}
\thanks{Ma was supported in part by NSF Grant DMS \#1836867/1600198.}
\author{Thomas Polstra}
\thanks{Polstra was supported in part by NSF Postdoctoral Research Fellowship DMS $\#1703856$.}
\author{Karl Schwede}
\thanks{Schwede was supported in party by NSF CAREER Grant DMS \#1252860/1501102 and NSF Grant \#1801849.}
\author{Kevin Tucker}
\thanks{Tucker grateful to the NSF for partial support under Grants DMS \#1602070 and \#1707661, and for a fellowship from the Sloan Foundation.}
\address{Department of Mathematics\\ Purdue University\\  West Lafayette\\ IN 47907}
\email{ma326@purdue.edu}
\address{Department of Mathematics\\ University of Utah\\ Salt Lake City\\ UT 84112}
\email{polstra@math.utah.edu}
\address{Department of Mathematics\\ University of Utah\\ Salt Lake City\\ UT 84112}
\email{schwede@math.utah.edu}
\address{Department of Mathematics\\ University of Illinois at Chicago\\Chicago\\  IL 60607}
\email{kftucker@uic.edu}
\begin{document}

\begin{abstract}
We study $F$-signature under proper birational morphisms $\pi : Y \to X$, showing that $F$-signature strictly increases for small morphisms or if $ K_Y \geq \pi ^*K_X$.  In certain cases, we can even show that the $F$-signature of $Y$ is at least twice as that of $X$.  We also provide examples of $F$-signature dropping and Hilbert-Kunz multiplicity increasing under birational maps without these hypotheses.
\end{abstract}

\maketitle

\section{Introduction}

Kunz showed that a local ring $(R, \fram, {\boldsymbol{k}} = {\boldsymbol{k}}^p)$ of positive characteristic is regular if and only if $F^e_* R$ is a free $R$-module, \cite{KunzCharacterizationsOfRegularLocalRings}.  The $F$-signature is a measure of singularities that simply states the percentage of $F^e_* R$ that is free (measured in terms of a rank of a maximal free summand).
$F$-signature was implicitly introduced by K.~Smith and M.~Van Den Bergh \cite{SmithVanDenBerghSimplicityOfDiff} and formally defined by C.~Huneke and G.~Leuschke in \cite{HunekeLeuschkeTwoTheoremsAboutMaximal}, although it wasn't shown to exist until \cite{TuckerFSigExists}.


In this paper we study the behavior of $F$-signature under birational morphisms.  Our main result is as follows.
\begin{mainthm*}[\autoref{thm:secondgoal}, \autoref{thm.CanonicalOrAntiCanonicalBlowup}]
Let $X$ be a strongly $F$-regular variety of dimension $n$ over an algebraically closed field ${\boldsymbol{k}}$ of characteristic $p>0$.  Suppose $\pi \colon Y \to X$ is a proper birational morphism from a normal variety $Y$ and fix a point $y \in \mathrm{Exc}(\pi)$ with $\pi(y)=x$.  Suppose additionally that either:
\begin{enumerate}
    \item $\pi$ is small, \textit{i.e.} $\pi$ is an isomorphism outside a set of codimension at least two in $Y$, or;
    \item The canonical divisor $K_X$ is $\QQ$-Cartier and for every exceptional divisor $E$ containing $y$, we have that $\mathrm{coeff}_E(K_Y-\pi^*K_X) \leq 0$.  For instance, if all discrepancies are non-positive.
\end{enumerate}
Then we have $$s(\OO_{X,x}) < s(\OO_{Y,y}).$$

Furthermore, if $X$ is not Gorenstein at $x$ and $\pi : Y \to X$ is a small morphism obtained as the blowup of either $\O_X(K_X)$ or $\O_X(-K_X)$, then $$2 \cdot s(\OO_{X,x}) \leq s(\OO_{Y,y}).$$
\end{mainthm*}


The first part of our main theorem is a characteristic $p > 0$ analog of a result on normalized volume by Y.~Liu and C.~Xu \cite[Corollary 2.11]{LiuXuKStabilityOfCubicThreefolds}.  We thank both Y.~Liu and C.~Xu for inspiring discussions about the relation between $F$-signature and normalized volume, also see \cite[Theorem 6.14]{LiLiuXuGuidedNormalizedVolume} and \cite{LiuFVolume}.

We finally note that the condition that the blowup of $\O_X(K_X)$ (respectively of $\O_X(-K_X)$) is small can be interpreted as requiring that the graded ring $\O_X \oplus \O_X(K_X) \oplus \O_X(2K_X) \oplus \dots$ is generated in degree $1$ (respectively that $\O_X \oplus \O_X(-K_X) \oplus \O_X(-2K_X) \oplus \dots$ is generated in degree $1$).  This condition is satisfied in surprisingly many rings, including determinantal rings \cite[Corollary~7.10, Theorem~8.8]{BrunsVetterDeterminantal}.

For comparison, in \cite{CaravjalSchwedeTuckerFundamentalGroups}, Javier Carvajal-Rojas and the final two authors of this paper studied the behavior of $F$-signature under finite morphisms (showing that it went strictly up in a controllable way) and used their results to show that the \'etale fundamental group of the punctured spectrum of a strongly $F$-regular singularity was finite.  This was a characteristic $p > 0$ analog of \cite{XuFinitenessOfFundGroups}, and was later shown to imply Xu's result by \cite{BhattGabberOlssonFiniteness}.  Note Xu's proof also used ideas related to volume.

In \autoref{sec.Examples}, we provide examples showing that the $F$-signature can decrease outside of the hypotheses of the main theorem.  We also show that the Hilbert-Kunz multiplicity can increase in that setting as well.
\vskip 9pt
\noindent
{\it Acknowledgements:} We thank Yuchen Liu, Anurag K. Singh, and Chenyang Xu for valuable conversations. We also thank Harold Blum and Takumi Murayama for sharing with us an alternative proof of Lemma~\ref{lem:makeH1growbasic}.

\section{Preliminaries}
\label{sec.Preliminaries}

All schemes and morphisms of schemes considered in this paper will be separated and all rings and schemes will be Noetherian.  Rings and schemes of prime characteristic $p > 0$ will be assumed to be $F$-finite (meaning that the Frobenius map is a finite map).

We are dealing with $F$-signature in this paper and so we recall its definition.  First some notation.  If $R$ is a ring of characteristic $p > 0$ and $M$ is an $R$-module, we use $F^e_* M$ to denote $M$ viewed as an $R$-module under the action of $e$-iterated Frobenius.  For any $R$-module $M$, we use $\frk(M)$ to denote the \emph{free-rank of $M$}, or in other words the maximal rank of a free $R$-module appearing in a direct sum decomposition of $M$, $M = R^{\oplus \frk(M)} \oplus N$.  On the other hand, if $R$ is a domain, we use $\rk(M)$ to denote the (generic) rank of $M$, that is $\rk(M) = \dim_{K(R)} (M \otimes_R K(R))$ where $K(R)$ denotes the fraction field of $R$.

Inspired by the fact that for an $F$-finite local ring $(R, \fram)$, $F^e_* R$ is a free $R$-module if and only if $R$ is regular, we make the following definition which measures how free $F^e_* R$ is, asymptotically.

\begin{definition}[$F$-signature, \cite{HunekeLeuschkeTwoTheoremsAboutMaximal}]
Suppose that $R$ is an $F$-finite domain.  The \emph{$F$-signature of $R$} is defined to be
\[
s(R) = \lim_{e \to \infty} {\frk(F^e_* R) \over \rk(F^e_* R) }.
\]
This limit exists by \cite{TuckerFSigExists} and \cite{DeStefaniPolstraYaoGlobalizing}, also see \cite{PolstraTuckerCombined}.  Furthermore, by \cite[Theorem B]{DeStefaniPolstraYaoGlobalizing}, $\displaystyle s(R) = \min_{\fram \subseteq R} \{ s(R_{\fram}) \}$ where $\fram$ runs over maximal ideals of $R$.  Hence, for any Noetherian integral $F$-finite scheme $X$ we can define
\[
s(X) = \min_{x \in X} s(\O_{X,x}).
\]
\end{definition}

It is clear that $0 \leq s(R) \leq 1$ and it is a fact that $s(R) = 1$ if and only if $R$ is regular by \cite{HunekeLeuschkeTwoTheoremsAboutMaximal} and \cite{DeStefaniPolstraYaoGlobalizing}.  Furthermore, $s(R) > 0$ if and only if $R$ is strongly $F$-regular by \cite{AberbachLeuschke} and \cite{DeStefaniPolstraYaoGlobalizing}.  For our purposes, it will be important to recall that strongly $F$-regular rings are Cohen-Macaulay and normal.

One common tool used to study $F$-signature are \emph{Frobenius degeneracy ideals}.  In particular, if $(R, \fram)$ is an $F$-finite local ring, for each $e > 0$, following \cite{AberbachEnescuStructureOfFPure}, we define
\[
I_e = \{ a \in R \;|\; \phi(F^e_* (aR)) \subseteq \fram, \text{ for all $\phi \in \Hom_R(F^e_* R, R)$ } \}.
\]
It is not difficult to see, \cite{AberbachEnescuStructureOfFPure,YaoObservationsAboutTheFSignature} that
\[
s(R) = \lim_{e \to \infty} {\lambda_R( R/I_e )\over p^{e\dim(R)}}
\]
where $\lambda_R(\bullet)$ denotes the length of the module $\bullet$.  We refer the reader to \cite{HunekeLeuschkeTwoTheoremsAboutMaximal,PolstraLower,PolstraTuckerCombined,TuckerFSigExists} for additional properties of $F$-signature.

Since we are going to study the behavior of $F$-signature under birational maps, we need to understand how maps like $F^e_* \O_X \to \O_X$ (for example, picking out a summand), extend to birational maps.  Suppose $X$ is an $F$-finite normal and integral scheme.  We first notice that $\phi : F^e_* \O_X \to \O_X$ induces a map $F^e_* K(X) \to K(X)$ (simply by tensoring with the fraction field of $X$).  If $\pi : Y \to X$ is a birational map, we obtain an induced map $\widetilde\phi : F^e_* \O_Y \to K(Y)$ (since the fraction fields of $X$ and $Y$ are isomorphic).  It is natural to ask whether
\[
\widetilde\phi(F^e_* \O_Y) \subseteq \O_Y
\]
in which case we say that \emph{$\phi$ extends to a map on $Y$, $\widetilde{\phi} :F^e_* \O_Y \to \O_Y$}.
On the other hand, each $\phi : F^e_* \O_X \to \O_X$ induces a $\bQ$-divisor $\Delta_{\phi} \geq 0$ such that $(1-p^e)(K_X + \Delta_{\phi}) \sim 0$, see \cite[Section 4]{BlickleSchwedeSurveyPMinusE}.

For a proper birational map $\pi: Y \to X$ with $Y$ normal, we may pick canonical divisors $K_Y$ and $K_X$ that agree wherever $\pi$ is an isomorphism.  When working on charts or at stalks of $Y$ and $X$, we continue to use these fixed canonical divisors $K_Y$ and $K_X$.

\begin{lemma}
\label{lem.ExtendingMapsBirationally}
Suppose that $X$ is an $F$-finite normal scheme and that $\pi : Y \to X $ is a finite type birational map from a normal scheme $Y$ with fixed $K_Y$ and $K_X$ as above.  A map $\phi : F^e_* \O_X \to \O_X$ extends to a map $\widetilde\phi : F^e_* \O_Y \to \O_Y$ as above if and only if $K_Y - \pi^*(K_X + \Delta_{\phi}) \leq 0$.

Furthermore, all $\phi : F^e_* \O_X \to \O_X$ extend to $Y$ if either of the following two conditions are satisfied.
\begin{itemize}
\item[(a)]  $\pi$ is small, in other words there is a set $W \subseteq X$ of codimension $\geq 2$ such that $\pi^{-1}(W)$ also has codimension $\geq 2$ in $Y$ and $\pi : Y \setminus \pi^{-1}(W) \to X \setminus W$ is an isomorphism.
\item[(b)]  $K_X$ is $\bQ$-Cartier and $K_Y - \pi^* K_X \leq 0$.
\end{itemize}
\end{lemma}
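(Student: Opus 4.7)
The plan is to translate the extension question into an inequality of $\QQ$-divisors on $Y$ by identifying every $\phi\colon F^e_*\OO_X \to \OO_X$ with a single rational function on the common fraction field $K(X)=K(Y)$. Concretely, I would use the standard isomorphism $\sHom_{\OO_X}(F^e_*\OO_X,\OO_X)\cong F^e_*\OO_X((1-p^e)K_X)$ together with its analogue on $Y$. Both isomorphisms are induced from a single generator of the one-dimensional $K(X)$-vector space $\Hom_{K(X)}(F^e_*K(X),K(X))$, so fixing such a generator $\Phi_{\mathrm{gen}}$ once and for all, a choice of $\phi$ corresponds to an element $f\in K(X)=K(Y)$ with $\phi(\blank)=\Phi_{\mathrm{gen}}(f\cdot\blank)$, and the rational extension $\widetilde\phi$ is determined by the \emph{same} $f$.

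With $f$ in hand, the equivalence becomes a divisor calculation. The condition $\phi(F^e_*\OO_X)\subseteq\OO_X$ unwinds to $\divisor_X(f)+(1-p^e)K_X\geq 0$, and by construction of $\Delta_\phi$ the effective divisor on the left equals $(p^e-1)\Delta_\phi$, so
\[
\divisor_X(f)=(p^e-1)(K_X+\Delta_\phi),
\]
which in passing shows $K_X+\Delta_\phi$ is $\QQ$-Cartier. Because $f$ lives in the common fraction field and principal divisors pull back, $\divisor_Y(f)=\pi^*\divisor_X(f)=(p^e-1)\pi^*(K_X+\Delta_\phi)$. Analogously, $\widetilde\phi(F^e_*\OO_Y)\subseteq\OO_Y$ iff $\divisor_Y(f)\geq(p^e-1)K_Y$, which after dividing by $p^e-1$ is exactly $K_Y-\pi^*(K_X+\Delta_\phi)\leq 0$. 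This gives the main equivalence in both directions.

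For the sufficient conditions, case (b) is immediate: $K_X$ $\QQ$-Cartier together with $K_X+\Delta_\phi$ $\QQ$-Cartier forces $\Delta_\phi$ itself to be $\QQ$-Cartier and effective, hence $\pi^*\Delta_\phi\geq 0$, and
\[
K_Y-\pi^*(K_X+\Delta_\phi)=(K_Y-\pi^*K_X)-\pi^*\Delta_\phi\leq 0.
\]
For case (a), I would instead verify $K_Y-\pi^*(K_X+\Delta_\phi)\leq 0$ coefficient-by-coefficient. Every prime divisor $E\subseteq Y$ meets the isomorphism locus $Y\setminus\pi^{-1}(W)$ in a dense open, since $\pi^{-1}(W)$ has codimension at least $2$; on that locus $K_Y=\pi^*K_X$ by the matching choice of canonical divisors, so the coefficient of $E$ agrees with that of $-\pi^*\Delta_\phi$, which is $\leq 0$ by effectivity of $\Delta_\phi$. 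The main bookkeeping obstacle is simply to keep the various pullbacks ($\pi^*\divisor_X(f)$, $\pi^*(K_X+\Delta_\phi)$, and in case (b) also $\pi^*\Delta_\phi$) consistently defined through their $\QQ$-Cartier representatives; once that is in place, both the equivalence and each case (a), (b) reduce to direct inspection.
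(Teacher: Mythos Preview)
Your proposal is correct and follows essentially the same route as the paper. The paper simply cites \cite[Lemma~7.2.1]{BlickleSchwedeSurveyPMinusE} for the main equivalence (observing that $\Delta_{\widetilde\phi}\geq 0$ iff $\widetilde\phi$ lands in $\OO_Y$), whereas you unpack that reference by tracking the single rational function $f$ through the duality isomorphism on both $X$ and $Y$; for (a) the paper writes the one-line computation $K_Y-\pi^*(K_X+\Delta_\phi)=-\pi^{-1}_*\Delta_\phi\leq 0$, which is exactly your coefficient-by-coefficient check rephrased using the strict transform, and for (b) both arguments are the same ``immediate'' subtraction.
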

\begin{proof}
The first statement is \cite[Lemma 7.2.1]{BlickleSchwedeSurveyPMinusE} (notice that $\Delta_{\widetilde{\phi}} \geq 0$ if and only if $\phi$ extends to a map on $Y$).  For (a), notice that $K_Y - \pi^*(K_X + \Delta_{\phi}) = -\pi^{-1}_* \Delta_{\phi} \leq 0$.  Condition (b) is immediate.
\end{proof}
\section{Finitely generated canonical and anti-canonical algebras}
\label{sec.FinitelyGeneratedCanonical}

Before handling the case of more general blowups, we consider the case of a small proper birational map obtained by blowing up either the canonical or anti-canonical local algebra under the special assumption that \emph{those algebras are standard graded}.

\begin{lemma}\textnormal{(\cf \cite[Proposition 3.10]{SannaiOnDualFSignature})}
\label{lem.HowManyOmegasSplit}
Suppose that $(R, \fram, {\boldsymbol{k}})$ is an $F$-finite strongly $F$-regular local ring which is not Gorenstein.  Then we can write
\[
F^e_* R = R^{\oplus a_e} \oplus \omega_R^{\oplus b_e} \oplus M_e
\]
where $M_e$ has no free $R$ or $\omega_R$-summands.  Furthermore $\displaystyle{\lim_{e\to \infty} {b_e\over \rk(F^e_*R)}=s(R)}$ and in particular $\displaystyle{\lim_{e \to \infty} {a_e \over b_e} = 1}$.
\end{lemma}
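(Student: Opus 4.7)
The plan is to get the direct-sum decomposition from Krull--Schmidt-type considerations for maximal Cohen--Macaulay modules, to translate the count $b_e$ into the free rank of $F^e_*\omega_R$ via $\omega_R$-duality, and then to invoke the fact that the $F$-signature limit is insensitive to replacing the ambient module by any rank-$1$ reflexive module.

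\emph{Step 1 (Decomposition).} Because $R$ is strongly $F$-regular it is Cohen--Macaulay and normal, so $\omega_R$ is an indecomposable rank-$1$ reflexive module with $\End_R(\omega_R)=R$, and $R\not\cong\omega_R$ since $R$ is not Gorenstein. The module $F^e_*R$ is maximal Cohen--Macaulay. Passing to the $\mathfrak{m}$-adic completion, where Krull--Schmidt holds for MCM modules over a CM local ring, the multiplicities of the pairwise non-isomorphic indecomposables $R$ and $\omega_R$ in a decomposition of $F^e_*R$ are well-defined; using that $R$ is $F$-finite hence excellent, these summands descend to $R$ (splittings off of $R$ and off of $\omega_R$ transfer between $R$ and $\widehat R$ by faithfully flat descent of idempotents in the relevant Hom modules). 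This produces integers $a_e,b_e\geq 0$ and a decomposition $F^e_*R=R^{\oplus a_e}\oplus \omega_R^{\oplus b_e}\oplus M_e$ with $M_e$ admitting neither an $R$- nor an $\omega_R$-summand.

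\emph{Step 2 (Duality).} Apply $\Hom_R(-,\omega_R)$ to the decomposition and use the identities $\Hom_R(R,\omega_R)=\omega_R$, $\Hom_R(\omega_R,\omega_R)=R$, and $\Hom_R(F^e_*R,\omega_R)\cong F^e_*\omega_R$ (the last because, under the finite ring extension $R\to F^e_*R$, the canonical module of $F^e_*R$ is $F^e_*\omega_R$). This gives
$$F^e_*\omega_R \;\cong\; \omega_R^{\oplus a_e}\oplus R^{\oplus b_e}\oplus N_e, \quad N_e:=\Hom_R(M_e,\omega_R).$$
Since $\Hom_R(-,\omega_R)$ is an involutive duality on MCM $R$-modules, an $R$- (respectively $\omega_R$-) summand of $N_e$ would produce an $\omega_R$- (respectively $R$-) summand of $M_e$, contradicting its construction. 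Therefore $\frk(F^e_*\omega_R)=b_e$.

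\emph{Step 3 (Conclusion).} It is a general fact (\textit{cf.} \cite{PolstraLower,PolstraTuckerCombined}) that for a strongly $F$-regular local ring and any rank-$1$ reflexive $R$-module $L$ one has $\frk(F^e_*L)/p^{e\dim R}\to s(R)$. Taking $L=\omega_R$ yields $b_e/\rk(F^e_*R)\to s(R)$, which is the first assertion; combined with $a_e/\rk(F^e_*R)\to s(R)$ and the positivity $s(R)>0$, we obtain $a_e/b_e\to 1$.

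The main technical input is the module-invariance of the $F$-signature limit used in Step 3. If one prefers to avoid quoting it, one can prove it directly by a sandwich argument: choose nonzero $r,s\in R$ with $r\omega_R\subseteq R$ and $sR\subseteq \omega_R$ inside $K(R)$, apply $F^e_*$, and use that multiplication by $F^e_*r$ and $F^e_*s$ transports splittings between $F^e_*R$ and $F^e_*\omega_R$ with asymptotically negligible loss in $e$. The other points --- the existence and well-definedness of the decomposition across the completion, and the duality computation --- are routine, so the only genuine subtlety is this asymptotic comparison.
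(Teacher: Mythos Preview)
Your proof is correct and follows essentially the same approach as the paper: both arguments use $\omega_R$-duality to identify $b_e$ with $\frk(F^e_*\omega_R)$, and then invoke the module-invariance of $F$-signature (the paper cites \cite[Theorem~4.11]{TuckerFSigExists} for $s(\omega_R)=s(R)\rk(\omega_R)$, which is the same statement you attribute to \cite{PolstraLower,PolstraTuckerCombined}). The only cosmetic difference is that you dualize the entire decomposition at once and appeal to the involutivity of $\Hom_R(-,\omega_R)$ on MCM modules, whereas the paper establishes the two inequalities $b_e\leq \frk(F^e_*\omega_R)$ and $\frk(F^e_*\omega_R)\leq b_e$ separately by dualizing individual split surjections; your extra care with Krull--Schmidt over the completion is not strictly needed but does no harm.
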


\begin{proof}
Consider a split surjection $F^e_*R\to \omega_R^{\oplus b_e}$. Then the induced map $ \Hom_R(\omega_R^{\oplus b_e}, \omega_R)\to \Hom_R(F^e_*R,\omega_R)$ remains split. Moreover, $\Hom_R(F^e_*R, \omega_R)\cong F^e_*\Hom_R(R,\omega_R)\cong F^e_*\omega_R$ and $\Hom_R(\omega_R^{\oplus b_e},\omega_R)\cong R^{\oplus b_e}$. Therefore $b_e$ is no more than $\frk(F^e_*\omega_R)$. Conversely, if we set $c_e=\frk(F^e_*\omega_R)$ and consider a split surjective map $F^e_*\omega_R\to R^{\oplus c_e}$ then the induced map $\Hom_R(R^{\oplus c_e}, \omega_R)\to \Hom_R(F^e_*\omega_R, \omega_R)$ remains split. Moreover, $\Hom_R(R^{\oplus c_e}, \omega_R)\cong R^{\oplus c_e}$ and $\Hom_R(F^e_*\omega_R, \omega_R)\cong F^e_*\Hom_R(\omega_R,\omega_R)\cong F^e_*R$. Therefore $c_e=\frk(F^e_*\omega_R)$ is no more than $b_e$ and so the two numbers coincide. In particular, $b_e=\frk(F^e_*\omega_R)$ and in conclusion
\[ \lim_{e\to \infty}\frac{b_e}{\rk(F^e_*R)}=\lim_{e\to \infty}\frac{\frk(F^e_*\omega_R)}{\rk(F^e_*R)}= s(\omega_R)=s(R)\rk(\omega_R)=s(R).
\]
The equality of $s(\omega_R)$ and $s(R)\rk(\omega_R)$ is the content of \cite[Theorem~4.11]{TuckerFSigExists}
\end{proof}

\begin{theorem}
\label{thm.CanonicalOrAntiCanonicalBlowup}
Suppose that an $F$-finite local ring $(R, \fram)$ is not Gorenstein and that either
 \begin{itemize}
 \item[(a)] $S= \bigoplus_nR(nK_R)$ is generated as a graded ring in degree $1$ or that
 \item[(b)]  $S = \bigoplus_n R(-nK_R)$ is generated as a graded ring in degree $1$.
 \end{itemize}
 Set $Y = \Proj S$ with $\pi : Y \to \Spec R$ the induced map.  Then we have $s(Y) \geq 2s(R)$.
\end{theorem}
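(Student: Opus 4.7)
The plan is to build a direct summand of $F^e_*\OO_{Y,y}$ of free rank at least $a_e + b_e$ at every $y \in Y$ by lifting both the $R$- and the $\omega_R$-summands of $F^e_*R$ produced by Lemma~\ref{lem.HowManyOmegasSplit} to $Y$, using that $\omega_Y$ will be a line bundle under our hypotheses.  We may assume $R$ is strongly $F$-regular, else $s(R)=0$ and there is nothing to prove.

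I begin by verifying the geometry of $\pi \colon Y = \Proj S \to \Spec R$.  Because $S$ is generated in degree one, $\OO_Y(1)$ is invertible.  Over the Gorenstein locus $U \subseteq \Spec R$ (whose complement has codimension $\geq 2$ by normality), $R(\pm K_R)|_U$ is invertible, so $\pi^{-1}(U) \to U$ is an isomorphism.  Thus $\pi$ is a proper small birational morphism, $Y$ is normal, and $K_Y = \pi^*K_X$.  Invoking the reflexive-sheaf equivalence under small morphisms, the reflexive rank-one sheaf $\omega_Y = \OO_Y(K_Y)$ satisfies $\pi_*\omega_Y = \omega_R$, and one identifies $\omega_Y \cong \OO_Y(1)$ in case (a) and $\omega_Y \cong \OO_Y(-1)$ in case (b); in either case $\omega_Y$ is a line bundle.

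Next, Lemma~\ref{lem.HowManyOmegasSplit} supplies structural split maps $\iota \colon R^{\oplus a_e} \oplus \omega_R^{\oplus b_e} \hookrightarrow F^e_*R$ and $p \colon F^e_*R \twoheadrightarrow R^{\oplus a_e} \oplus \omega_R^{\oplus b_e}$ with $p\circ\iota = \id$.  The extension step uses the fact---underlying \autoref{lem.ExtendingMapsBirationally}---that for small birational $\pi$ between normal schemes, pushforward is $\Hom$-preserving on reflexive sheaves: for reflexive $\sF, \sG$ on $Y$,
\[
\pi_* \colon \Hom_{\OO_Y}(\sF, \sG) \xrightarrow{\sim} \Hom_{\OO_X}(\pi_*\sF, \pi_*\sG).
\]
Since $\OO_Y$, $\omega_Y$, $F^e_*\OO_Y$ are all reflexive with pushforwards $R$, $\omega_R$, $F^e_*R$, the maps $\iota$ and $p$ lift uniquely to
\[
\widetilde\iota \colon \OO_Y^{\oplus a_e}\oplus \omega_Y^{\oplus b_e} \to F^e_*\OO_Y \quad\text{and}\quad \widetilde p \colon F^e_*\OO_Y \to \OO_Y^{\oplus a_e}\oplus \omega_Y^{\oplus b_e},
\]
and one obtains $\widetilde p \circ \widetilde\iota = \id$ by injectivity of the $\Hom$-bijection together with $p\circ\iota = \id$ on $X$.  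Hence $\OO_Y^{\oplus a_e}\oplus \omega_Y^{\oplus b_e}$ is a direct summand of $F^e_*\OO_Y$.

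To finish, fix $y\in Y$.  Since $\omega_Y$ is a line bundle, $\omega_{Y,y}\cong \OO_{Y,y}$, giving $\OO_{Y,y}^{\oplus(a_e+b_e)}$ as a direct summand of $F^e_*\OO_{Y,y}$.  As $\rk(F^e_*\OO_{Y,y}) = \rk(F^e_*R)$ (they share a function field), dividing by this common rank and applying Lemma~\ref{lem.HowManyOmegasSplit} yields
\[
s(\OO_{Y,y}) \;\geq\; \lim_{e\to\infty}\frac{a_e + b_e}{\rk(F^e_*R)} \;=\; 2s(R),
\]
and the minimum over $y$ gives $s(Y) \geq 2s(R)$.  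The primary obstacle is the extension step: showing that $\widetilde p \circ \widetilde\iota = \id$ on all of $Y$ (not merely over the iso-locus $U$) hinges on reflexivity of $F^e_*\OO_Y$ and on smallness, which together force any map defined on $U$ to extend uniquely across codimension two.  A secondary delicate point is the identification of $\omega_Y$ as an actual line bundle---this is precisely what allows the $\omega_Y$-summand to contribute to the free part of $F^e_*\OO_{Y,y}$ after stalk localization.
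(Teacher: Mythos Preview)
Your proposal is correct and follows essentially the same strategy as the paper: reduce to the strongly $F$-regular case, observe that $\omega_Y$ is invertible, lift the split decomposition $F^e_*R \twoheadrightarrow R^{\oplus a_e}\oplus \omega_R^{\oplus b_e}$ of Lemma~\ref{lem.HowManyOmegasSplit} to $Y$, and count free summands locally.  The only cosmetic difference is in the extension step---the paper says ``pull back via $\pi^*$ and reflexify'' to obtain the split surjection on $Y$, whereas you invoke the $\Hom$-equivalence for reflexive sheaves under a small birational map between normal schemes; these are two phrasings of the same mechanism, and your formulation is arguably the more transparent justification that the lifted maps still compose to the identity.
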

\begin{proof}
The statement is trivial if $R$ is not strongly $F$-regular since then $s(R) = 0$.  Hence we may assume that $R$ is strongly $F$-regular.  In the case that $S = \bigoplus_n R(nK_R)$, we have that the small morphism $\pi : Y \to \Spec R$ is the blowup of $R(K_R)$ and hence $K_Y$ is Cartier.  In the case that $S = \bigoplus_n R(-nK_R)$ we have that $Y$ is the blowup of $R(-K_R)$ and so $-K_Y$ is Cartier, but then the inverse $K_Y$ is Cartier too.

Consider the split surjection
\[
F^e_* R \to R^{\oplus a_e} \oplus \omega_R^{\oplus b_e}
\]
guaranteed by \autoref{lem.HowManyOmegasSplit}.  We pull back via $\pi^*$ and reflexify and we obtain a split surjection
\[
F^e_* \O_Y \to \O_Y^{\oplus a_e} \oplus \O_Y(K_Y)^{\oplus b_e}.
\]
However, $\O_Y(K_Y)$ is locally free and hence the result follows since $b_e$ grows at the same rate as $a_e$ again by \autoref{lem.HowManyOmegasSplit}.
\end{proof}

\section{Behavior under more general blowups}
\label{sec.BehaviorMoreGeneralBlowups}

We now come to the proof of the more general case.
Throughout this section we work with varieties over an algebraically closed field ${\boldsymbol{k}}$ of positive characteristic $p$.  We begin with several lemmas.

\begin{lemma}
\label{lem:nohighercohomolvanishingalongpowers}
\textnormal{(\textit{cf.} \cite[Lemma 3.9]{LazarsfeldMustataConvexBodiesLinearSeries})}
Let $X$ be a projective variety, $x \in X$ a closed point of dimension $n$, and $A$ an ample Cartier divisor on $X$.  For all $1 \gg \epsilon > 0$ there exists $ \delta > 0$ such that
\begin{equation*}
h^i(X, \OO_X(kA) \otimes \m_x^{\lceil \epsilon k \rceil}) = 0 \mbox{ for } i > 0\mbox{, and}
\end{equation*}
\begin{equation*}
 h^0(X, \OO_X(kA)) - h^0(X, \OO_X(kA) \otimes \m_x^{\lceil \epsilon k \rceil})  \geq \delta \cdot k^n
\end{equation*}
for all $k \gg 1$.
\end{lemma}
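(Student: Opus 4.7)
The plan is to pass to the blowup of $\m_x$, obtain cohomological vanishing on the blowup via Serre vanishing (Kawamata--Viehweg is unavailable in characteristic $p>0$), descend via the Leray spectral sequence to get the first inequality, and then conclude with Hilbert--Samuel applied to $\OO_{X,x}$ for the second.

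\textbf{Setup.} Let $\pi \colon Y \to X$ denote the blowup of $\m_x$, realized as the relative Proj $Y = \Proj_X \bigoplus_{n \geq 0} \m_x^n$ of the Rees algebra. Since this Rees algebra is a domain generated in degree one over $\OO_X$, $Y$ is an integral projective scheme over $\boldsymbol{k}$, and $\m_x \cdot \OO_Y = \OO_Y(-E) = \OO_Y(1)$ for an effective Cartier divisor $E$. Because $-E$ is $\pi$-ample and $\pi^* A$ is the pullback of an ample divisor, a standard relative-to-absolute amplitude argument furnishes a rational $\epsilon_0 > 0$ with $\pi^* A - \epsilon_0 E$ ample on $Y$; fix any rational $\epsilon \in (0, \epsilon_0)$.

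\textbf{Vanishing on $Y$.} Choose $q \in \NN$ with $q\epsilon \in \NN$, and set $M := q(\pi^* A - \epsilon E)$, an ample integral Cartier divisor. For $k \gg 0$ and $m := \lceil \epsilon k \rceil$, write $k = sq + r$ with $0 \leq r < q$; since $sq\epsilon \in \ZZ$, a direct computation shows
\[
\pi^*(kA) - mE \;=\; sM + N_r, \qquad N_r := r\pi^* A - \lceil \epsilon r \rceil E.
\]
Only finitely many line bundles $N_r$ appear as $r$ ranges over $\{0, \ldots, q-1\}$, so Serre vanishing applied to the ample $M$ twisted by each $N_r$ yields a uniform $s_0$ with $H^i(Y, sM + N_r) = 0$ for all $i > 0$, $s \geq s_0$, and $0 \leq r < q$. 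Hence $H^i(Y, \pi^*(kA) - mE) = 0$ for all $i > 0$ and $k \gg 0$.

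\textbf{Descent and Hilbert--Samuel.} By Serre's theorem on $\Proj$ applied to the Rees algebra, for $m \gg 0$ we have $\pi_* \OO_Y(-mE) = \m_x^m$ and $R^i \pi_* \OO_Y(-mE) = 0$ for $i > 0$. The projection formula and the Leray spectral sequence then give
\[
H^i(X, \OO_X(kA) \otimes \m_x^m) \;\cong\; H^i(Y, \pi^*(kA) - mE) \;=\; 0 \qquad (i > 0)
\]
for $k \gg 0$, which is the first claim. Tensoring $0 \to \m_x^m \to \OO_X \to \OO_X/\m_x^m \to 0$ with the line bundle $\OO_X(kA)$ and invoking this $H^1$-vanishing yields
\[
h^0(X, \OO_X(kA)) - h^0(X, \OO_X(kA) \otimes \m_x^m) \;=\; \ell(\OO_{X,x}/\m_x^m),
\]
which by Hilbert--Samuel equals $\frac{e(\OO_{X,x})}{n!} m^n + O(m^{n-1})$ and thus exceeds $\delta k^n$ for any $\delta < \frac{e(\OO_{X,x}) \epsilon^n}{n!}$ once $k \gg 0$.

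\textbf{Main obstacle.} The two delicate points are (a) securing cohomological vanishing in positive characteristic without Kodaira-type theorems---handled by the finite-residues reduction to Serre vanishing---and (b) arranging that the pushforward of $\OO_Y(-mE)$ recovers the literal power $\m_x^m$ rather than its integral closure, which forces us to work on the \emph{un}normalized blowup $Y = \Proj_X \bigoplus_n \m_x^n$ and to invoke Serre's theorem on $\Proj$ directly for the Rees algebra.
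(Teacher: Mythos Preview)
Your argument is correct and follows the same outline as the paper's proof: blow up $\m_x$, prove vanishing of $H^i(Y, k\pi^*A - \lceil \epsilon k\rceil E)$ for $i>0$, descend via the projection formula and Leray (using $\pi_*\OO_Y(-mE) = \m_x^m$ and $R^j\pi_* \OO_Y(-mE)= 0$ for $m \gg 0$), and finish with Hilbert--Samuel. The one point of departure is the vanishing step itself. The paper decomposes
\[
k\pi^*A - \lceil \epsilon k\rceil E \;=\; (k - \lceil \epsilon k\rceil m)\,\pi^*A \;+\; \lceil \epsilon k\rceil\,(m\pi^*A - E)
\]
as (nonnegative multiple of nef) $+$ (multiple of ample) and invokes Fujita vanishing, which is valid in all characteristics and handles every real $\epsilon < 1/m$ at once. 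Your finite-residues reduction to ordinary Serre vanishing is more elementary and avoids citing Fujita, at the cost of requiring $\epsilon \in \QQ$; this is harmless, since for irrational $\epsilon$ one may choose rational $\epsilon' \in (\epsilon, \epsilon_0)$ and note that the cokernel of $\m_x^{\lceil \epsilon' k\rceil} \hookrightarrow \m_x^{\lceil \epsilon k\rceil}$ is a skyscraper at $x$, so vanishing for $\epsilon'$ propagates to $\epsilon$.
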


\begin{proof}
Let $\mu \colon X' \to X$ be the blowup of $X$ along $\m_x$, with $\m_x \cdot \OO_{X'} = \OO_{X'}(-E)$.  Since $-E$ is $\mu$-ample, for a sufficiently large integer $m  > 1$ we have that $m \mu^* A - E$ is ample on $X'$. Shrinking $\epsilon$ if necessary, we may assume $m \epsilon  < 1$ and thus $\lceil \epsilon k \rceil m \leq k$ for $k \gg 0$. Since
\begin{equation*}
k\mu^* A - \lceil \epsilon k\rceil E = (k-\lceil \epsilon k\rceil m)\mu^* A - \lceil \epsilon k\rceil(m\mu^* A - E),
\end{equation*}
by Fujita vanishing \cite[Chapter 1.4.D, Theorem 1.4.35 and Remark 1.4.36]{LazarsfeldPositivity1} (using that $\mu^* A$ is nef) we have that
\begin{equation*}
 H^i(X', \OO_{X'}(k\mu^* A - \lceil \epsilon k\rceil E) = 0 \mbox{ for } i>0.
\end{equation*}
Recalling that
\begin{equation*}
\mu_*(\OO_{X'}(-\lceil \epsilon k \rceil E)) = \m_x^{\lceil \epsilon k \rceil } \; , \; R^j\mu_*(\OO_{X'}(-\lceil \epsilon k \rceil E)) = 0 \mbox{ for } j>0
\end{equation*}
provided $k \gg 1$ as shown \cite[Lemma 5.4.24]{LazarsfeldPositivity1}.  It follows that
\begin{equation*}
H^i(X, \OO_X(kA) \otimes \m_x^{\lceil \epsilon k \rceil }) = H^i(X', \OO_{X'}(k \mu^* A - \lceil \epsilon k\rceil E)) = 0 \mbox{ for } i > 0
\end{equation*}
when $k \gg 1$ by the vanishing above. In particular this holds for $i = 1$, and hence from the short exact sequence
\[
0 \to \OO_X(kA) \otimes \m_x^{\lceil \epsilon k \rceil} \to \OO_X(kA) \to \OO_{X,x} / \m_x^{\lceil \epsilon k \rceil} \to 0
\]
we have
\begin{equation*}
 h^0(X, \OO_X(kA)) - h^0(X, \OO_X(kA) \otimes \m_x^{\lceil \epsilon k \rceil})  = \dim_k(\OO_{X,x} / \m_x^{\lceil \epsilon k \rceil}) = P_{X,x}(\lceil \epsilon k \rceil)
\end{equation*}
where $P_{X,x}$ is the Hilbert-Samuel polynomial of $\OO_{X,x}$.  Thus, choosing $0 < \delta < \frac{\epsilon^n}{n!} e(\OO_{X,x})$ gives
\begin{equation*}
h^0(X, \OO_X(kA)) - h^0(X, \OO_X(kA) \otimes \m_x^{\lceil \epsilon k \rceil})  = P_{X,x}(\lceil \epsilon k \rceil) > \frac{\delta}{\epsilon^n} (\lceil \epsilon k \rceil)^n \geq \delta k^n
\end{equation*}
for $k \gg 1$.
\end{proof}

\begin{lemma}
\label{lem:makeH1growbasic}
\textnormal{(\textit{cf.} \cite[Lemma 2.9]{LiuXuKStabilityOfCubicThreefolds})}
Let $Y$ be a normal projective variety of dimension $n$ over a field ${\boldsymbol{k}}$ of prime characteristic $p>0$, and $L$ a nef and big Cartier divisor on $Y$.  Let $y \in Y$ be a closed point of an irreducible curve $C$ satisfying $(L \cdot C) = 0$.  Then there exists $\epsilon > 0$ so that
\begin{equation*}
    h^1(Y,\OO_Y(kL) \otimes \m_y^{k}) \geq \epsilon k^n \mbox{ for } k \gg 1.
\end{equation*}
\end{lemma}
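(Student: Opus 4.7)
The plan is to combine the cohomology long exact sequence with asymptotic Riemann-Roch and an intersection-theoretic volume comparison on the blowup at $y$. From the short exact sequence
$$0 \to \OO_Y(kL) \otimes \m_y^k \to \OO_Y(kL) \to \OO_Y(kL) \otimes (\OO_Y/\m_y^k) \to 0,$$
in which the right-hand term is a skyscraper with stalk $\OO_{Y,y}/\m_y^k$ since $\OO_Y(kL)$ is locally free, I extract the inequality
$$h^1(Y, \OO_Y(kL) \otimes \m_y^k) \geq \dim_{\boldsymbol{k}}(\OO_{Y,y}/\m_y^k) + h^0(Y, \OO_Y(kL) \otimes \m_y^k) - h^0(Y, \OO_Y(kL)) - h^1(Y, \OO_Y(kL)).$$
Hilbert-Samuel gives $\dim_{\boldsymbol{k}}(\OO_{Y,y}/\m_y^k) = e(\OO_{Y,y}) k^n/n! + O(k^{n-1})$, and since $L$ is big and $Y$ is proper, asymptotic Riemann-Roch (valid in any characteristic) yields $h^0(Y, \OO_Y(kL)) = \vol(L) k^n/n! + o(k^n)$ and $h^i(Y, \OO_Y(kL)) = o(k^n)$ for $i \geq 1$.

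Next, I interpret $h^0(Y, \OO_Y(kL) \otimes \m_y^k)$ as a volume on a birational modification. Let $\mu \colon Y' \to Y$ be the blowup of $\m_y$ (normalized if $y$ is singular), with exceptional divisor $E$ satisfying $\mu^{-1}\m_y \cdot \OO_{Y'} = \OO_{Y'}(-E)$. Then
$$h^0(Y, \OO_Y(kL) \otimes \m_y^k) = h^0(Y', \OO_{Y'}(k\mu^*L - kE)) = \vol(\mu^*L - E) \cdot k^n/n! + o(k^n),$$
using the projection formula together with $R^i\mu_*\OO_{Y'}(-kE) = 0$ for $i > 0$ (care is needed at a singular $y$ via integral closure, but introduces only $o(k^n)$ errors). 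Thus the lemma reduces to the strict inequality $\vol(\mu^*L - E) > \vol(L) - e(\OO_{Y,y})$ with a definite positive margin.

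To establish this, I invoke the inequality $\vol(D) \geq D^n$ for any big divisor $D$ on a smooth projective variety (valid in any characteristic), together with the direct intersection-theoretic computation $(\mu^*L - E)^n = L^n - e(\OO_{Y,y}) = \vol(L) - e(\OO_{Y,y})$, which follows from the projection formula, $\mu_*(E^i) = 0$ for $1 \leq i \leq n-1$, and $E^n = (-1)^{n-1} e(\OO_{Y,y})$ for the blowup of the maximal ideal. The curve $C$ then produces strict inequality: the strict transform $\tilde C \subset Y'$ pairs with $\mu^*L - E$ in degree $-\mult_y(C) < 0$, so $\mu^*L - E$ is not nef, forcing $\vol(\mu^*L - E) > (\mu^*L - E)^n$ by a definite positive quantity arising from the negative-part contribution of $\tilde C$ in the divisorial Zariski (or sigma-) decomposition. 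The main obstacle is quantifying this strictness in general dimension, where higher-dimensional Zariski decomposition is subtler than the surface case (for surfaces one has the clean formula $\vol(D) - D^2 = -N^2 > 0$ for $D = P + N$ the Zariski decomposition); the alternative proof of Blum and Murayama acknowledged in the introduction likely avoids this difficulty via a more direct asymptotic or Okounkov-body argument.
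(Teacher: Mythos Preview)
Your reduction of the lemma to the volume inequality
\[
\vol(\mu^*L - E) > (\mu^*L - E)^n = L^n - e(\OO_{Y,y})
\]
is correct and insightful; indeed the long exact sequence argument shows the two statements are equivalent up to $o(k^n)$ terms. The genuine gap is in your proof of this inequality. The assertion ``$\vol(D) \geq D^n$ for any big divisor $D$ on a smooth projective variety'' is \emph{false} in dimension $\geq 3$. For a concrete counterexample, take $Y' = \mathrm{Bl}_p \PP^3$ with exceptional divisor $E$ and $H = \mu^* \OO_{\PP^3}(1)$, and set $D = H + tE$ for $t > 0$. Then $D$ is big (since $H$ is big and $tE$ is effective), $D^3 = H^3 + t^3 E^3 = 1 + t^3$, but $h^0(kD) = h^0(\PP^3, \OO(k))$ because $\mu_* \OO_{Y'}(ktE) = \OO_{\PP^3}$, so $\vol(D) = 1 < 1 + t^3 = D^3$. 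Note moreover that $D$ is negative on every line in $E$, so the presence of a $D$-negative curve does \emph{not} force $\vol(D) > D^n$; this is exactly the mechanism you were hoping to exploit. Your surface heuristic $\vol(D) - D^2 = -N^2 > 0$ from Zariski decomposition genuinely breaks down in higher dimension, and you correctly sensed this when you flagged the ``main obstacle.''

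The paper's first proof avoids volumes entirely. It observes that $\psi^*L - E + \delta A$ is still not ample for small $\delta > 0$ (using the strict transform of $C$), and then appeals to a theorem of Murayama guaranteeing that for such divisors \emph{some} $h^i(\hat{Y}, \OO_{\hat{Y}}(k(\psi^*L - E)))$ grows like $\epsilon k^n$; one then rules out $i \geq 2$ via the same short exact sequence you used, since those agree with $h^i(Y, \OO_Y(kL)) = O(k^{n-i})$ by nefness of $L$. The paper's second proof is more hands-on: it tracks the base loci of $|k\psi^*L - mE|$ along $\hat{C}$ using asymptotic test ideals and the non-nef locus, and then sums first differences $\Delta_k(m)$ of $h^1$ over a range of $m$. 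Either route supplies precisely the ``definite positive margin'' that your volume approach cannot deliver.
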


The following proof was provided to us by Takumi Murayama.  We will provide an alternative (and somewhat longer) proof below.

\begin{proof}
Let $\psi: \hat{Y}\to Y$ be the normalized blowup of $Y$ along $\m_y$ and let $\m_y\O_{\hat{Y}}=\O_{\hat{Y}}(-E)$. Let $\hat{C}$ be the strict transform of $C$ in $\hat{Y}$, in which case $\hat{C}\cdot E >0$. Let $A$ be a very ample Cartier divisor on $\hat{Y}$ so that the $\Q$-Cartier divisor $\psi^*L-E+\delta A$ is not ample for $1\gg \delta >0$ since
\[
(\psi^*L-E+\delta A)\cdot \hat{C}=-E\cdot \hat{C}+\delta A\cdot \hat{C},
\]
which is negative for all $1\gg \delta>0$.

Fix $1\gg \delta >0$. Since $\psi^* L -E +\delta A$ is not ample and $\psi^* L -E=\psi^* L -E+\delta A-\delta A$ there exists some $i>0$ and $\epsilon>0$ such that
\[
h^i(\hat{Y}, \O_{\hat{Y}}(m(\psi^*L-E)))\geq \epsilon m^n
\]
for all $m\gg 0$ by \cite[Theorem~B]{MurayamaGammaConstruction}. By \cite[Lemma~5.4.24]{LazarsfeldPositivity1}, for $k\gg0$ we have
\[
h^i(\hat{Y},\O_{\hat{Y}}(k(\psi^*L-E))=h^i(Y, \O_Y(kL)\otimes \m_y^k)).
\]
But if $i\geq 2$ then the exact sequence of cohomology derived from twisting the short exact
\[
0\to \m_y^k\to \O_Y\to \O_Y/\m_y^k\to 0
\]
by $kL$ shows
\[
h^i(Y, \O_Y(kL)\otimes \m_y^k))= h^i(Y, \O_Y(kL))
\]
for all $i\geq 2$. By \cite[Theorem~1.4.40]{LazarsfeldPositivity1}
\[
h^i(Y, \O_Y(kL))=O(k^{n-i}).
\]
Therefore it is only possible for $h^i(Y, \O_Y(kL)\otimes \m_y^k))\geq \epsilon k^n$ for all $m\gg0$ when $i=1$ which completes the proof of the lemma.
\end{proof}

The above proof of Lemma~\ref{lem:makeH1growbasic} provides an alternative proof to \cite[Lemma~2.9]{LiuXuKStabilityOfCubicThreefolds}. One would need to replace the reference of \cite[Theorem~B]{MurayamaGammaConstruction} with \cite[Theorem~A]{DeFernexKuronyaLazarsfeldHigherCohomology}. Nevertheless, we present a second proof of Lemma~\ref{lem:makeH1growbasic} which closely resembles the proof of \cite[Lemma~2.9]{LiuXuKStabilityOfCubicThreefolds}.  We suspect the alternative proof will be of independent interest. 

\begin{lemma}
\label{lem:nonneflocus}
Suppose $V$ is a normal projective variety, and $D$ is a $\QQ$-Cartier $\QQ$-divisor with non-negative Iitaka dimension that is not nef.  Let $Z \subseteq V$ be an irreducible curve such that $D \cdot Z < 0$.  Let $g \colon W' \to V$ be a regular alteration dominating the blowup of $I_Z$ such that $g^{-1}(Z)$ has simple normal crossings.  Then $\tau( W', m \parallel g^* D\parallel)$ vanishes along $g^{-1}(Z)$ for all integers $m \gg 1$.  In particular, if $D$ is big, every irreducible component of $g^{-1}(Z)$ is contained in the non-nef locus of $g^*(D)$.
\end{lemma}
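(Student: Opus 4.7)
The plan is to exhibit a positive lower bound on the asymptotic order of vanishing of $g^*D$ along each irreducible component $E_j$ of $g^{-1}(Z)$. From such a bound both conclusions follow: standard properties of asymptotic test ideals on the smooth $W'$ (namely, that when each section of $|\ell g^*D|$ vanishes to order at least $c\ell$ along $E_j$, the base ideal lies in $I_{E_j}^{\lceil c\ell \rceil}$) yield $\tau(W', m \parallel g^*D \parallel) \subseteq I_{E_j}$ for $m \gg 1$, and, when $D$ is big, positive asymptotic order of vanishing along $E_j$ is equivalent to $E_j$ being contained in the non-nef locus of $g^*D$ by Nakayama's theory of divisorial Zariski decompositions.

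First I would factor $g = \pi \circ h$, where $\pi \colon V' \to V$ is the blowup of $I_Z$ (with irreducible exceptional divisor $F$, since $Z$ is irreducible) and $h \colon W' \to V'$ is proper. Since $g^{-1}(Z) = h^{-1}(F)$ set-theoretically, each $E_j$ appears in $\supp(h^*F)$ with positive multiplicity $b_j > 0$, so $\ord_{E_j}(h^*t) \geq b_j \, \ord_F(t)$ for local functions $t$ on $V'$. This reduces the problem to a uniform linear bound $\ord_F(\pi^*s) \geq cm$ for some $c > 0$ and every nonzero $s \in H^0(V, mD)$ (for $m \gg 1$). Sections of $|m g^*D|$ on $W'$ that do not arise as pullbacks from $V$ are handled via Stein factorization $g = g_2 \circ g_1$ with $g_1$ proper with connected fibers and $g_2$ finite: the inequality $D \cdot Z < 0$ is preserved under pullback by $g_2$ to any curve over $Z$, so we reduce to the same argument after replacing $V$ by the intermediate cover.

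The crux is the lower bound on $V'$. Since $mD \cdot Z < 0$, the restriction $s|_Z$ is a section of a negative-degree line bundle on $Z$, hence vanishes, giving $\ord_F(\pi^*s) \geq 1$. Setting $k = \ord_F(\pi^*s)$ and writing $\pi^*s = t_F^{k} s'$ with $s'|_F \not\equiv 0$, so that $s' \in H^0(V', m\pi^*D - kF)$, I then restrict $s'$ to a movable curve $C \subseteq F$ through a general point such that $\pi|_C \colon C \to Z$ is surjective of positive degree $d$ and such that $F \cdot C < 0$. Such $C$ can be constructed as a complete intersection of sufficiently general very ample divisors on $F$ of the form $\xi + a\, \pi|_F^* H_Z$, with $\xi$ the tautological class on the $\mathbb{P}^{n-2}$-bundle $F \to Z$ and $H_Z$ ample on $Z$; the sign of $F \cdot C$ is then computed from Segre-class intersection theory on the projective bundle. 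The non-negativity of $\deg((m\pi^*D - kF)|_C)$ yields $m(\pi^*D \cdot C) - k(F \cdot C) \geq 0$, and combined with $\pi^*D \cdot C = d(D \cdot Z) < 0$ and $F \cdot C < 0$ this rearranges to $k \geq m \cdot d|D \cdot Z|/|F \cdot C|$, the required linear bound with $c = d |D\cdot Z|/|F \cdot C|$.

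The main obstacle I foresee is this intersection-theoretic construction of a curve $C$ with both $\pi^*D \cdot C < 0$ and $F \cdot C < 0$ simultaneously; verifying the latter sign requires careful control of the geometry of $F$ as a projective bundle over $Z$, and if $V$ or $Z$ is singular near $Z$ one first passes to a log resolution dominated by $W'$ so that the Segre-class computation applies cleanly. Everything else is routine bookkeeping combining negativity of intersection numbers, behavior of divisorial valuations under proper morphisms, and the definition of the asymptotic test ideal via base ideals of pluri-linear systems.
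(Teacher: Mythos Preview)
Your approach is genuinely different from the paper's, and the core idea---bounding the asymptotic order of vanishing of $g^*D$ along each $E_j$ by peeling off the exceptional divisor $F$ of the blowup and restricting to a moving curve $C \subseteq F$---is reasonable.  However, the argument has a real gap precisely at the point you flag as the ``main obstacle,'' and your proposed fix does not close it.

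The issue is not merely verifying the sign $F\cdot C<0$.  The step ``restrict $s'$ to a movable curve $C \subseteq F$ through a general point such that $\pi|_C \colon C \to Z$ is surjective'' presupposes that the locus where $s'|_F \not\equiv 0$ meets a component of $F$ dominating $Z$.  When $V$ (or, after Stein factorization, the intermediate $W$) is singular along $Z$, the exceptional divisor $F$ of the blowup can acquire components lying over a single point of $Z$.  If $s'|_F$ happens to be supported only on such a vertical component, every curve $C$ through the relevant points satisfies $\pi^*D\cdot C = 0$, and your degree inequality yields only $k \geq 0$, not $k \geq cm$.  Passing to a log resolution as you suggest does not help: it trades one set of vertical components for another, and you are left with the same problem on the new model.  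The Stein-factorization reduction compounds this, since the intermediate variety $W$ is only normal and you have no control over the exceptional geometry of $\mathrm{Bl}_{Z_i} W$.

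For comparison, the paper's proof avoids any direct order-of-vanishing estimate.  It uses Musta\c{t}\u{a}'s theorem that $\mathcal{O}_{W'}(mg^*D + A)\otimes \tau(W', m\,\|g^*D\|)$ is globally generated: any curve $C$ not contained in the zero locus of $\tau$ must then satisfy $(mg^*D + A)\cdot C \geq 0$.  For components of $g^{-1}(Z)$ dominating $Z$ this immediately forces vanishing of $\tau$ for $m\gg 0$ via general complete-intersection curves.  For components mapping to a point---exactly the case your argument misses---the paper gives a separate argument using connectedness of the fibers of the Stein factorization together with subadditivity of test ideals to propagate vanishing from a neighboring horizontal component.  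Your proposal has no analog of this second step, and without it the argument is incomplete.
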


\begin{proof}
Replacing $D$ with a positive multiple, we may assume that $D$ is a Cartier divisor.
Let $\mu \colon V' \to V$ be the normalized blowup of $I_Z$, with $I_Z \OO_{V'} = \OO_{V'}(-E)$ and $f' \colon W' \to V'$ the induced map factoring $g$, so that the divisor $E' = (f')^*E$ has simple normal crossing support.  Let $g \colon W' \xrightarrow{\nu} W \xrightarrow{f}V$ be the Stein factorization of $g$ (in other words, $W := {\bf Spec}\; g_* \O_{W'}$), so that we have a commutative diagram
\begin{equation*}
    \xymatrix{
    W' \ar[r]^{f'} \ar[d]_{\nu} \ar[dr]|-{g} & V'\ar[d]^{\mu} \\
    W \ar[r]_{f} & V
    }
\end{equation*}
where $f$ is finite, $\nu$ is birational, and $W$ is normal.  Since $f$ is finite, $f^{-1}(\{Z\})$ is a union of finitely many irreducible curves $Z_1, \dots, Z_r$ that dominate $Z$.  Note that, if  $C \subseteq W'$ is an irreducible curve that dominates $Z$, we have by the projection formula that $(g^*D)\cdot C = (\deg g|_C)(D \cdot Z) < 0$.

Given $m \geq 1$, consider the asymptotic test ideal $\tau(W', m \parallel g^* D\parallel ) \subseteq \OO_{W'}$.  If $H$ is a very ample divisor on $W'$ and $A = K_{W'} + (\dim W' + 1)H$, then
\begin{equation*}
    \OO_{W'}(m (g^*D) + A) \otimes \tau(m\parallel g^*D \parallel)
\end{equation*}
is globally generated for all $m \geq 1$ by \cite[Theorem A]{MustataNonNefLocusPositiveChar}.
Therefore, if $C \subseteq W'$ is an irreducible curve that is not contained in the zero locus of $\tau(W', m \parallel g^*D \parallel)$, then
\begin{equation}
\label{eq.mCondition}
    (m(g^*D) + A) \cdot C \geq 0.
\end{equation}
Thus, if $C$ dominates $Z$, and hence $(g^*D) \cdot C < 0$ similarly to the above, we must have that $C$ is contained in the zero locus of $\tau(W', m \parallel g^*D \parallel)$ for all $m > -(A \cdot C)/ ((g^*D) \cdot C)$.  Note this condition on $m$ comes from negating \autoref{eq.mCondition} and solving for $m$.

Consider a component $E_i'$ of $E'$ that dominates $Z$.  A general complete intersection curve $C$ on $E_i'$ then dominates $Z$, and thus $\tau(W', m_i \parallel g^*D \parallel)$ must vanish along $C$ for some $m_i \gg 1$. As we vary the complete intersection that defines $C$, the condition on $m_i$ does not change.  Thus, in fact, $\tau(W', m_i \parallel g^*D \parallel)$ must vanish along all of $E_i'$.

Supposing now that $E_i'$ is a component of $E'$ that maps to a point of $Z$, we again wish to show that $\tau(W', m_i \parallel g^*D \parallel)$ must vanish along all of $E_i'$ for $m_i \gg 1$. We have that $E_i'$ necessarily also maps to a point of $Z_s \subseteq f^{-1}(Z) \subseteq W$ for some $s$.  Note that some component of $E'$ must dominate $Z_s$ (since $\nu$ is surjective and $\nu^{-1}(Z_s) \subseteq \Supp(E')$) and $\nu^{-1}(Z_s)$ is connected as $W$ is normal.
In light of the previous paragraph, it suffices to show $\tau(W', m_i \parallel g^*D \parallel)$ must vanish along all of $E_i'$ for $m_i \gg 1$.  
We may assume that $E_i'$ intersects another component $E_j'$ of $E'$ along which $\tau(W', m_j \parallel g^*D \parallel)$ is known to vanish for some $m_j \gg 0$.

Take a general complete intersection curve $C \subseteq E_i'$ that meets $E_j'$ in at least one point $P$, which we may assume to be a smooth point of $C$.  We know that
\begin{equation*}
    \OO_{W'}(lm_j (g^*D) + A) \otimes \tau(W', lm_j \parallel g^*D \parallel)
\end{equation*}
is globally generated for any $l \geq 1$.  Thus, whenever $\tau(W', lm_j \parallel g^*D \parallel)$ does not vanish along $C$, we can find an effective divisor $F \sim_{\ZZ} (lm_j (g^*D) + A)$ not containing $C$ that vanishes along $\tau(W', lm_j \parallel g^*D \parallel)$.  Let us consider what happens when we restrict $F$ to $C$.  Note that since $E_i'$ maps to a point of $Z$, so too does $C \subseteq E_i'$, whence $(g^*D)\cdot C  = 0$.
Furthermore,  $\tau(W',  m_j \parallel g^*D \parallel) \subseteq \OO_{W'}(-E_j')$ by assumption, so we have that
\[
\tau(W',  lm_j \parallel g^*D \parallel) \subseteq \tau(W', m_j \parallel g^*D \parallel)^l \subseteq \OO_{W'}(-lE_j')
\]
for all $l \geq 1$ by subadditivity \cite[Theorem 4.5]{HaraYoshidaGeneralizationOfTightClosure}.
Thus, $F$ must vanish at least to order $l$ at $P$, so that
\[
A\cdot C = (0 + A)\cdot C= F \cdot C \geq l.
\]
But $A$ does not depend on $l$, so this is impossible, and so $\tau(W', lm_j \parallel g^*D \parallel)$ vanishes along $C$.
Fix $l > A\cdot C$ and set $m_i = lm_j$.  It follows $\tau(W', m_i \parallel g^*D \parallel)$ must vanish along $C$ and hence also $E_i'$, as desired.

Thus, taking $m'$ sufficiently large and divisible, we conclude from above that
\begin{equation*}
    \tau(W', m' \parallel g^*D \parallel) \subseteq \OO_{W'}(-E'_{\mathrm{red}})
\end{equation*}
so that $\tau(W', m \parallel g^* D\parallel)$ vanishes along $g^{-1}(Z) = E'_{\mathrm{red}}$ for all integers $m \gg 1$.  In particular, if $D$ is big, every irreducible component $E_i'$ of $g^{-1}(Z)$ is contained in the non-nef locus of $g_*(D)$ by  \cite[Theorem 6.2]{MustataNonNefLocusPositiveChar}.
\end{proof}

\begin{lemma}
\label{lem:baselocusofcurve}
\textnormal{(\textit{cf.} \cite[Proposition 1.1]{DeFernexKuronyaLazarsfeldHigherCohomology} and \cite[Proposition 4.5]{MurayamaGammaConstruction})}
Suppose that $V$ is a normal projective variety and $Z \subseteq V$ is an irreducible curve.  Let $L$ and $E$ be Cartier divisors, with $L$ big and $E$ effective. Assume $L\cdot Z = 0$, that $E$ does not contain $Z$ and that {{$E \cdot Z > 0$}}.

If $0 < \gamma_1 < \gamma_2$ are real numbers  such that $L - \gamma_2 E$ remains big, then there exists $\epsilon > 0$ and a positive integer $c$ such that $\b(| kL - mE|) \subseteq I_Z^{\lfloor \epsilon k \rfloor - c}$ for all integers $m$ and $k$ such that $\gamma_1 k \leq m \leq \gamma_2 k$.
\end{lemma}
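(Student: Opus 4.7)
The plan is to relate the base ideal of $|kL - mE|$ near $Z$ to the non-nef locus of the $\QQ$-Cartier divisor $D(\gamma) := L - \gamma E$, where $\gamma = m/k$, using Lemma~\ref{lem:nonneflocus} together with \mustata's characterization of the non-nef locus through asymptotic orders of vanishing, and then to extract uniform estimates as $\gamma$ ranges over $[\gamma_1,\gamma_2]$.

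First I would observe that for every $\gamma \in [\gamma_1,\gamma_2]$ the divisor $D(\gamma)$ is big: both $L$ and $L - \gamma_2 E$ are big by hypothesis, and the big cone is convex. Moreover $D(\gamma) \cdot Z = -\gamma(E\cdot Z) < 0$, so $Z$ witnesses that $D(\gamma)$ is not nef. Fix a single regular alteration $g \colon W' \to V$ dominating the normalized blowup of $I_Z$ and with $g^{-1}(Z)_{\mathrm{red}} = \sum_i E_i'$ of simple normal crossing support, as in Lemma~\ref{lem:nonneflocus}; since the construction there only uses $Z$, one such $g$ works simultaneously for every $D(\gamma)$. Applied to each $D(\gamma)$, Lemma~\ref{lem:nonneflocus} places every component $E_i'$ in the non-nef locus of $g^*D(\gamma)$; by \cite[Theorem~6.2]{MustataNonNefLocusPositiveChar}, this translates to $\mathrm{ord}_{E_i'}(\parallel g^*D(\gamma)\parallel) > 0$.

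The main step, and what I expect to be the key technical obstacle, is to promote these pointwise positivity statements to a uniform linear lower bound valid for all pairs $(k,m)$ with $\gamma_1 k \leq m \leq \gamma_2 k$. Here I would use that the asymptotic order function $\gamma \mapsto \mathrm{ord}_{E_i'}(\parallel g^*D(\gamma)\parallel)$ is convex (by homogeneity and subadditivity of asymptotic multiplicities) and continuous on the interior of the big cone, in which $[\gamma_1,\gamma_2]$ lives by hypothesis; being strictly positive on this compact interval, it then admits a positive infimum $\delta > 0$. Combined with standard bounds comparing finite-level orders of vanishing to their asymptotic versions, this yields a constant $c_0$ such that every global section of $|g^*(kL - mE)| = |k g^* D(m/k)|$ vanishes along each $E_i'$ to order at least $\delta k - c_0$, uniformly in $(k,m)$. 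Justifying this step rigorously in positive characteristic is the most delicate part and likely requires the quantitative estimates of \cite{MurayamaGammaConstruction}.

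Finally, I would descend from $W'$ back down to $V$. Writing $I_Z \cdot \O_{W'} = \O_{W'}(-F)$ with $F = \sum_i a_i E_i'$ and $a = \max_i a_i$, the previous bound gives
\[
\b(|kL - mE|)\cdot \O_{W'} \;\subseteq\; \O_{W'}\bigl(-(\delta k - c_0)\, E'_{\mathrm{red}}\bigr) \;\subseteq\; (I_Z \cdot \O_{W'})^{\lfloor (\delta k - c_0)/a\rfloor}.
\]
Because $W' \to V$ factors through the blowup of $I_Z$, a section of $\O_V(kL - mE)$ vanishes along $Z$ to a given order precisely when its pullback to the blowup vanishes to the corresponding order along the exceptional divisor; chasing this through $g$ and absorbing a Briançon--Skoda-type constant (comparing $\overline{I_Z^{\bullet}}$ and $I_Z^{\bullet}$) into the additive shift, we obtain $\b(|kL - mE|) \subseteq I_Z^{\lfloor \epsilon k \rfloor - c}$ with $\epsilon = \delta/a$ and suitable $c$, as desired.
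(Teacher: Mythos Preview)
Your approach is essentially the same as the paper's: apply \autoref{lem:nonneflocus} on a fixed regular alteration $g\colon W'\to V$, use \cite[Theorem~6.2]{MustataNonNefLocusPositiveChar} to get positive asymptotic orders along each component of $g^{-1}(Z)$, extract a uniform lower bound on the compact interval $[\gamma_1,\gamma_2]$, and descend with a Brian\c{c}on--Skoda-type shift. Two small clarifications: the uniform positivity step needs no appeal to \cite{MurayamaGammaConstruction} --- continuity of $\ord_{E_i'}(\parallel\,\cdot\,\parallel)$ on the big cone is \cite[Theorem~6.1]{MustataNonNefLocusPositiveChar}, and since the asymptotic order is an \emph{infimum} over levels, one gets $\ord_{E_i'}\big(\b(|k g^*D(m/k)|)\big)\geq \delta k$ directly with no additive $c_0$; the constant $c$ enters only at the very end, via $\overline{I_Z^{\ell}}\subseteq I_Z^{\ell-c}$ (\cite[Proposition~5.3.4]{HunekeSwansonIntegralClosure}), after pushing down through the normalized blowup.
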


\begin{proof}  Without loss of generality, we assume that the base field ${\boldsymbol{k}} = \overline{{\boldsymbol{k}}}$ is uncountable.
Using \cite[Theorem 4.1]{deJongAlterations}, we may take a regular alteration
$g \colon W' \to V$  dominating the blowup of $I_Z$ such that $g^{-1}(Z)$ has simple normal crossings. Let $\mu \colon V' \to V$ be the normalized blowup of $I_Z$, with $I_Z \OO_{V'} = \OO_{V'}(-G)$ and  $f' \colon W' \to V'$ the induced map factoring $g$ so that $g = \mu \circ f'$.
For any $t \in \QQ \cap [\gamma_1, \gamma_2]$, $L - tE$ is big and $(L-tE)\cdot Z = -t(E \cdot Z) < 0$. Applying \autoref{lem:nonneflocus}, it follows that every irreducible component $g^{-1}(Z)$ is contained in the non-nef locus of $g^*(L-tE)$.

Thus, if $(f')^*G = G' = \sum a_i G_i'$ so that $(g^{-1}(Z))_{\reduced} = G'_{\mathrm{red}}$, it follows from \cite[Theorem 6.2]{MustataNonNefLocusPositiveChar} that
\begin{equation*}
 \ord_{G_i'}(\parallel g^*L - t g^*E\parallel ) = \inf_{l \geq 1 \atop tl \in \ZZ} \frac{1}{l} \ord_{G_i'} \big(\b(| l (g^*L - t g^*E)|)\big) > 0
\end{equation*}
for all $t \in \QQ \cap [\gamma_1, \gamma_2]$ and any $i$.  Since the asymptotic order of vanishing $\ord_{G_i'}(\parallel - \parallel)$ is continuous on the open cone of big divisors in $N^1(X)_{\RR}$ by \cite[Theorem 6.1]{MustataNonNefLocusPositiveChar}, there exists an $\epsilon' > 0$ so that
\begin{equation*}
 \ord_{G_i'}(\parallel g^*L - t g^*E\parallel ) > \epsilon'
\end{equation*}
for all $t \in [\gamma_1, \gamma_2]$ and any $i$.  In particular, we have that
\begin{equation*}
 \ord_{G'_i}\big(\b(|kg^*L - mg^*E|)\big) \geq k \epsilon'
\end{equation*}
for all integers $m,k$ satisfying $\gamma_1 k \leq m \leq \gamma_2 k$.  In this case, setting $a = \max_i a_i$ (the largest coefficient of $(f')^* G$) and $\epsilon = \epsilon'/a$ gives
\begin{equation*}
 \b(|kg^*L - mg^*E|) \subseteq \OO_{W'}(- \lfloor \epsilon k\rfloor G').
\end{equation*}
Since $(f')^*|k\mu^*L - m \mu^*E| \subseteq |(kg^*L - mg^*E)|$, we have
\begin{equation*}
\b(|k\mu^*L - m \mu^*E|)\cdot \OO_{W'} \subseteq \b(|kg^*L - mg^*E)|)  \subseteq \OO_{W'}(- \lfloor \epsilon k\rfloor G')
\end{equation*}
and pushing forward along $f'$ gives
\begin{equation*}
 \b(|k\mu^*L - m \mu^*E|)\cdot (f')_*\OO_{W'} \subseteq  (f')_*\OO_{W'}(-\lfloor \epsilon k\rfloor G') = \OO_{V'}(-\lfloor \epsilon k\rfloor G)\cdot (f')_*\OO_{W'}.
\end{equation*}
Thus, using that $\OO_{V'}$ is normal and  $\OO_{V'} \subseteq (f')_*\OO_{W'}$ is a finite and hence integral extension, we see
\begin{equation*}
\b(|k\mu^*L - m \mu^*E|) \subseteq \left( \OO_{V'}(-\lfloor \epsilon k\rfloor G)\cdot (f')_*\OO_{W'} \right) \cap \OO_{V'} = \OO_{V'}(-\lfloor  \epsilon k\rfloor G)
\end{equation*}
from \cite[Propositions 1.5.2 and 1.6.1]{HunekeSwansonIntegralClosure}.  On the other hand, we have $H^0(V, \OO_V(kL - mE)) = H^0(V', \OO_{V'}(k\mu^*L - m \mu^*E))$ again by normality, and in particular
\begin{equation*}
 \b(|kL - m E|) \cdot \OO_{V'} = \b(|k\mu^*L - m \mu^*E|).
\end{equation*}
Pushing forward along $\mu : V' \to V$ then gives $\b(|kL - m E|) \subseteq \overline{I_Z^{\lfloor \epsilon k \rfloor}}$.  Using \cite[Proposition 5.3.4]{HunekeSwansonIntegralClosure} there exists a positive integer $c$ so that $\overline{I_Z^\ell} \subseteq I_Z^{\ell-c}$ for all integers $\ell \geq c$, and the result now follows.
\end{proof}

\begin{proof}[Second proof of Lemma~\ref{lem:makeH1growbasic}]
We may assume that ${\boldsymbol{k}} = \overline{{\boldsymbol{k}}}$ is an uncountable field of prime characteristic. Let $\psi \colon \hat{Y} \to Y$ be the normalized blowup of $Y$ along $\m_y$, with $\m_y\cdot \OO_{\hat{Y}} = \OO_{\hat{Y}}(-E)$. Take $\hat{C}$ to be the strict transform of $C$ in $\hat{Y}$, noting that $\hat{C}$ is not contained in $E$ and {{$\hat{C} \cdot E > 0$}}.   We have that $\psi^*L$ is big with $\psi^*L \cdot \hat{C} = L \cdot C = 0$.  Moreover for some sufficiently large integer $\ell > 0$ we have that $\ell \psi^* L - E$ is also big.  Set $\gamma_2 = 1/\ell$ and choose $0 < \gamma_1 < \gamma_2$.

Consider the long exact sequence
\[
\dots \to H^1(Y, \OO_Y(kL) \otimes \m_y^{k}) \to H^1(Y, \OO_Y(kL) \otimes \overline{\m_y^{k}}) \to H^1(Y, \OO_Y(kL) \otimes \overline{\m_y^{k}}/\m_y^k) = 0
\]
where the vanishing holds since $(\overline{\m_y^k}) / \m_y^k$ is a skyscraper sheaf with support contained in $\{ y \}$.  Using this sequence and the fact that $R^j\psi_*\OO_{\hat{Y}}(-kE) = 0$ for $j > 0$ and sufficiently large $k$ (as $-E$ is $\psi$-ample), we have
\begin{equation}
\label{eq.ComparisonVsBlowupH1Sections}
  h^1(Y,\OO_Y(kL) \otimes \m_y^{k}) \geq h^1(Y,\psi_*\OO_{\hat{Y}}(k\psi^*L - kE)) = h^1(\hat{Y},\OO_{\hat{Y}}(k\psi^*L - kE))
\end{equation}
for all $k \gg 1$.  Consider now the differences
\begin{equation*}
 \Delta_k(m) := h^1(\hat{Y},\OO_{\hat{Y}}(k\psi^*L - (m+1)E)) -  h^1(\hat{Y},\OO_{\hat{Y}}(k\psi^*L - mE))
\end{equation*}
for $k \geq m > 0$.  If $m \gg 1$, and using that $\OO_{\hat{Y}}(\psi^*L)|_E = \OO_E$ and $\OO_{\hat{Y}}(-E)|_{E} \sim \OO_{E}(1)$ is ample, we have that
\begin{equation*}
h^1\left(E,\OO_{\hat{Y}}\left((k\psi^*L - mE)|_{E}\right)\right) = h^1(E, \OO_{E}(m)) = 0
\end{equation*}
using Serre vanishing.  Thus, if $m \geq \gamma_1k$ and $k \gg 1$, it follows that $\Delta_k(m) \geq 0$.  On the other hand, if additionally $\gamma_2 k  \geq m \geq \gamma_1 k$, we have from \autoref{lem:baselocusofcurve} that there is some $\epsilon' > 0$ and a positive integer $c$ so that
\begin{equation*}
 \b(|k\psi^*L - mE|) \subseteq I_{\hat{C}}^{\lfloor \epsilon'k\rfloor - c}.
\end{equation*}
Thus, if $\hat{y} \in \hat{C} \cap \Supp(E)$ is a closed point, we have an inclusion
\begin{equation}
\label{eq.ImageIsContainedInIdealPower}
\begin{array}{rl}
   & \IM\left(H^0(\hat{Y}, \OO_{\hat{Y}}(k\psi^* L - mE)) \to H^0(E,\OO_{\hat{Y}}(k\psi^*L-mE)|_E)\right)\\
   \subseteq & H^0(E,\OO_{E}(m) \otimes \m_{\hat{y}}^{\lfloor \epsilon'k\rfloor - c}).
   \end{array}
\end{equation}
Choosing $0 < \epsilon'' < \epsilon'/\gamma_2$, we have that for $k \gg 1$,
\begin{equation}
\label{eq.AnnoyingInequalities}
 \lfloor \epsilon' k \rfloor - c > \epsilon'k - c - 1 \geq \epsilon'' \gamma_2k + 1 \geq \epsilon'' m + 1 > \lceil \epsilon'' m \rceil.
\end{equation}

Shrinking $\epsilon''$ further if necessary, by \autoref{lem:nohighercohomolvanishingalongpowers} there exists $\delta > 0$ such that
\begin{eqnarray*}
 \Delta_k(m) &=& h^0(E,\OO_E(m)) - \rk\left(H^0(\hat{Y}, \OO_{\hat{Y}}(k\psi^* L - mE)) \to H^0(E,\OO_{\hat{Y}}(k\psi^*L-mE)|_E)\right) \\
 & \geq & h^0(E, \OO_E(m))-h^0(E, \OO_E(m)\otimes \m_{\hat{y}}^{\lfloor \epsilon'k\rfloor - c})\;\; \text{\hfill by \autoref{eq.ImageIsContainedInIdealPower}} \\
 & \geq & h^0(E, \OO_E(m))-h^0(E, \OO_E(m)\otimes \m_{\hat{y}}^{\lceil \epsilon'' m \rceil})\;\;\text{by \autoref{eq.AnnoyingInequalities}} \\
 & \geq & \delta m^{n-1}
\end{eqnarray*}
for all $\gamma_1 k \leq m \leq \gamma_2 k$ and $k \gg 1$.  Thus, we compute
\begin{eqnarray*}
 h^1(\hat{Y},\OO_{\hat{Y}}(k\psi^*L - kE)) & = & \left( \sum_{m=\lceil \gamma_1 k \rceil}^{k-1}\Delta_k(m) \right) + h^1(\hat{Y},\OO_{\hat{Y}}(k\psi^*L - \lceil \gamma_1 k \rceil E)) \\
 & \geq & \sum_{m=\lceil \gamma_1 k \rceil}^{\lceil \gamma_2 k \rceil - 1} \Delta_k(m) \text{ (since the dropped $\Delta_k(m) \geq 0$)} \\
 & \geq & \sum_{m=\lceil \gamma_1 k \rceil}^{\lceil \gamma_2 k \rceil - 1} \delta m^{n-1} \\
 & \geq & \delta (\lceil \gamma_1 k \rceil)^{n-1}(\lceil \gamma_2k \rceil -\lceil \gamma_1k \rceil) \\
 & \geq & \delta (\lceil \gamma_1 k \rceil)^{n-1}(\gamma_2k - 1  -\gamma_1k ) \\
 & \geq & \delta \gamma_1^{n-1}(\gamma_2 - \gamma_1)k^n - \delta \gamma_1^{n-1}k^{n-1}
\end{eqnarray*}
for all $k \gg1$. Thus, choosing $\epsilon < \delta \gamma_1^{n-1}(\gamma_2 - \gamma_1)$ implies that
\[
\epsilon k^n < \delta \gamma_1^{n-1}(\gamma_2 - \gamma_1)k^n - \delta \gamma_1^{n-1}k^{n-1} \leq h^1(\hat{Y},\OO_{\hat{Y}}(k\psi^*L - kE))
\]
for $k \gg 1$.  Therefore by \autoref{eq.ComparisonVsBlowupH1Sections},
\begin{equation*}
  h^1(Y,\OO_Y(kL) \otimes \m_y^{k}) \geq  h^1(\hat{Y},\OO_{\hat{Y}}(k\psi^*L - kE)) \geq \epsilon k^n
\end{equation*}
for all $k \gg 1$ as desired.
\end{proof}

Now we come to the main theorem of the section.

\begin{theorem}
\label{thm:secondgoal}
Let $X$ be a strongly $F$-regular variety of dimension $n$ over an algebraically closed field ${\boldsymbol{k}}$ of characteristic $p>0$.  Suppose $\pi \colon Y \to X$ is a proper birational morphism from a normal variety $Y$ and fix a point $y \in \mathrm{Exc}(\pi)$ with $\pi(y)=x$.  Suppose additionally that either:
\begin{enumerate}
    \item $\pi$ is small, \textit{i.e.} $\pi$ is an isomorphism outside of a set of codimension at least two in $Y$, or;
    \item The canonical divisor $K_X$ is $\QQ$-Cartier and for every exceptional divisor $E$ containing $y$, we have that \mbox{$\mathrm{coeff}_E(K_Y-\pi^*K_X) \leq 0$}.  For instance, this holds if all the discrepancies are non-positive.
\end{enumerate}
Then we have $s(\OO_{X,x}) < s(\OO_{Y,y})$.
\end{theorem}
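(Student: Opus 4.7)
The plan is to reduce to the projective setting, globalize the $F$-signature via an asymptotic count of sections of Frobenius-twisted sheaves, exploit the extension in \autoref{lem.ExtendingMapsBirationally} for the weak inequality $s(\O_{X,x}) \leq s(\O_{Y,y})$, and then upgrade to strict inequality using the extra $H^1$-growth on $Y$ provided by \autoref{lem:makeH1growbasic}.  Since $F$-signature is a local invariant, we may take projective compactifications so that $X$ and $Y$ are both normal projective varieties with $\pi$ proper birational and the hypotheses near $x,y$ preserved.  Choose an ample Cartier divisor $A$ on $X$ and set $L := \pi^*A$ on $Y$: then $L$ is nef and big but not ample, and since $y \in \mathrm{Exc}(\pi)$ one can find an irreducible curve $C$ through $y$ with $L\cdot C = 0$.

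Using the degeneracy-ideal formula $s(R) = \lim_e \lambda(R/I_e)/p^{e\dim R}$, I re-express $\lambda(\O_{X,x}/I_e)$, up to lower-order terms in $e$, as the dimension of the image of an evaluation map of the shape
\[
H^0\bigl(X,\sHom(F^e_*\O_X,\O_X(kA))\bigr) \otimes F^e_*\O_{X,x} \longrightarrow \O_X(kA)_x \big/ \m_x^{\lceil \epsilon k\rceil}\O_X(kA)_x,
\]
for $k$ chosen sufficiently large depending on $e$.  The needed vanishing $h^i(X,\O_X(kA)\otimes\m_x^{\lceil\epsilon k\rceil}) = 0$ for $i>0$ and the $\delta k^n$ lower bound on $h^0(X,\O_X(kA)) - h^0(X,\O_X(kA)\otimes\m_x^{\lceil\epsilon k\rceil})$ are exactly what \autoref{lem:nohighercohomolvanishingalongpowers} supplies, giving a purely cohomological formula for $s(\O_{X,x})$.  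The same formula on $Y$ (with $L$ in place of $A$) computes $s(\O_{Y,y})$.  By \autoref{lem.ExtendingMapsBirationally}, every $\phi \in \Hom_{\O_X}(F^e_*\O_X,\O_X(kA))$ extends to $\widetilde\phi \in \Hom_{\O_Y}(F^e_*\O_Y,\O_Y(kL))$ (extending generically and then using reflexivity on the normal variety $Y$), so the cohomological count on $X$ injects into that on $Y$, yielding the weak inequality $s(\O_{X,x}) \leq s(\O_{Y,y})$.

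For the strict inequality one needs an asymptotically positive surplus of splittings of $F^e_*\O_{Y,y}$ beyond those extended from $X$.  From
\[
0 \to \O_Y(kL)\otimes \m_y^k \to \O_Y(kL) \to \O_Y(kL)/\m_y^k \to 0
\]
and its long exact cohomology, \autoref{lem:makeH1growbasic} gives $h^1(Y, \O_Y(kL)\otimes\m_y^k) \geq \epsilon k^n$ for $k \gg 1$ — a failure of surjectivity of the $Y$-evaluation map that does \emph{not} occur on $X$, where the ample $A$ forces the analogous higher cohomology to vanish by \autoref{lem:nohighercohomolvanishingalongpowers}.  Incorporating the Frobenius twist $F^e_*$ (via the projection formula and reflexivity), this $\Omega(k^n)$ gap translates through the cohomological formula of the previous paragraph into an excess of order $p^{en}$ in $\lambda(\O_{Y,y}/I_e)$ over $\lambda(\O_{X,x}/I_e)$.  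Dividing by $p^{en}$ and passing to the limit gives $s(\O_{Y,y}) > s(\O_{X,x})$.

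\textbf{Main obstacle.}  The most delicate step is the last: calibrating the dependence of $k$ on $e$ so that an $H^1$-growth of order $k^n$ on $Y$ converts into a genuine order-$p^{en}$ increase in $\lambda(\O_{Y,y}/I_e)$, and verifying that the ``extra'' global sections produced on $Y$ correspond to new free summands of $F^e_*\O_{Y,y}$ rather than ones already visible from extensions off of $X$.  Tracking this under the Frobenius twist, while allowing hypothesis (2) in which $\pi$ need not be small, is the heart of the argument.
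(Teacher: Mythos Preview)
Your overall architecture matches the paper's---compactify, pull back an ample line bundle, invoke \autoref{lem.ExtendingMapsBirationally} for the comparison and \autoref{lem:makeH1growbasic} for strictness---but the central mechanism you propose has a genuine gap, and the paper proceeds differently.

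The ``cohomological formula for $s(\O_{X,x})$'' via the image of
\[
H^0\bigl(X,\sHom(F^e_*\O_X,\O_X(kA))\bigr) \otimes F^e_*\O_{X,x} \longrightarrow \O_X(kA)_x \big/ \m_x^{\lceil \epsilon k\rceil}
\]
is never made precise, and as written it does not compute $\lambda(\O_{X,x}/I_e)$: since $R$ is $F$-split, a single surjective $\phi$ already makes the image the whole target, so this dimension depends only on $\lceil\epsilon k\rceil$ and carries no information about $e$.  The paper does not attempt anything like this.  Instead it sets $R=\O_{X,x}$, $S=\O_{Y,y}$, $J_e:=I_e^S\cap R$, and works with $J_e$ and $I_e^S$ directly as ideal sheaves cosupported at $x$ and $y$.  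Extension of maps via \autoref{lem.ExtendingMapsBirationally} is used only to prove the containment $J_e\subseteq I_e^R$, giving $\lim_e p^{-ne}\ell(R/J_e)\geq s(R)$.  There is a single parameter $k=p^e$ throughout: one tensors $M^{\otimes p^e}$ with $J_e$ on $X$ and with $I_e^S$ on $Y$ and compares $h^0$ via Euler characteristics.  The decisive identity you are missing is
\[
\pi_*I_e^S=J_e\qquad(\text{from }\pi_*\O_Y=\O_X),
\]
which forces $h^0(Y,\pi^*M^{\otimes p^e}\otimes I_e^S)=h^0(X,M^{\otimes p^e}\otimes J_e)$ as an \emph{equality}, not merely the ``injection of counts'' you describe.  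With higher cohomology on $X$ killed (apply \autoref{lem:nohighercohomolvanishingalongpowers} to $\m_R^{dp^e}\subseteq J_e$) and $h^i$ for $i\geq 2$ on $Y$ bounded by nefness of $\pi^*M$, the $h^1$-term on $Y$ becomes exactly the gap $s(\O_{Y,y})-\lim_e p^{-ne}\ell(R/J_e)$.

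A second ingredient you omit is the containment $I_e^S\subseteq\m_S^{\,p^{e-e_0}}$ for some fixed $e_0$, which is what allows \autoref{lem:makeH1growbasic} to be applied with $L=\pi^*M^{\otimes p^{e_0}}$ and $k=p^{e-e_0}$.  This comes from strong $F$-regularity of $Y$ (inherited from $X$ via the extension property), and it dissolves your ``main obstacle'': there is no separate calibration of $k$ against $e$ to carry out.
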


\begin{proof}
If ${\boldsymbol{k}}$ is not uncountable then we base change by the field obtained by adjoining uncountably many indeterminants to ${\boldsymbol{k}}$ and then taking its algebraic closure.  Any closed points on the original varieties will correspond to points on the base-changed varieties, and their signatures will not change by \cite[Theorem~5.4]{YaoObservationsAboutTheFSignature}.  Thus we may assume that ${\boldsymbol{k}}$ is uncountable and algebraically closed.

Set $R = \OO_{X,x}$ and $S = \OO_{Y,y}$ so that we have a local inclusion $R \subseteq S$.  By the assumption that $\pi$ is either small or has non-positive discrepancy at $y$, it follows that $p^{-e}$-linear map on $R$ extends naturally to a $p^{-e}$-linear map on $S$, see \autoref{lem.ExtendingMapsBirationally}.  Consider the Frobenius degeneracy ideals $I_e^S$ of $S$ used to define the $F$-signature \autoref{sec.Preliminaries}, so that $\displaystyle{s(\OO_{Y,y}) = \lim_{e \to \infty} \frac{1}{p^{ne}}\ell(S/I_e^S)}$, and similarly for the Frobenius degeneracy ideals $I_e^R$ of $R$. Set $J_e = I_e^S \cap R$. Observe that, if $\m_R$ can be generated by $d$ elements, we have
\begin{equation*}
    \m_R^{dp^e} \subseteq \m_R^{[p^e]} \subseteq J_e \subseteq I_e^R.
\end{equation*}
Indeed, the first inclusion is standard by looking one monomial in the generators at a time.  The second inclusion follows from the fact that $\m_R^{[p^e]}\subseteq \m_S^{[p^e]} \subseteq I_e^S$.  For the last inclusion, suppose that $r \in R \setminus I_e^R$.  Then we know there exists a $p^{-e}$-linear map $\phi$ on $R$ so that $\phi(r) = 1$.  But then $\phi$ extends to $S$, and we still have $\phi(r) = 1$, so that $r \not\in I_e^S$.  Note also that $J_e^{[p]} \subseteq J_{e+1}$, so that $\displaystyle{\lim_{e \to \infty} \frac{1}{p^{ne}} \ell(R/J_e)}$ exists and is at least as large as $s(R) = s(\OO_{X,x})$, see \cite[Theorem~B]{PolstraTuckerCombined}.

Let us take suitable projective closures of $X,Y$ such that $\pi$ extends to a birational morphism between normal projective varieties.  Note that conditions (a) or (b) from the statement of the theorem will not necessarily hold on the entire compactifications, however we will not need this.  Let $M'$ be an ample line bundle on $X$.  By \autoref{lem:nohighercohomolvanishingalongpowers}, for all $1 \gg \epsilon' > 0$, $i > 0$ and $k \gg 1$ we have $H^i(X, (M')^{\otimes k}\otimes \m_R^{\lceil \epsilon' k \rceil})=0$.  Taking $\ell \gg 1$ so that $1/\ell < \epsilon'$, it follows that $H^i(X, (M')^{\otimes \ell dp^e}\otimes \m_R^{dp^e})=0$ for $i > 0$ and $e \gg 1$.  Setting $M = M'^{\otimes \ell d}$, and using that $J_e / \m_R^{dp^e}$ is supported only at $x \in X$, it follows that
\begin{equation*}
    \begin{array}{l}
    H^1(X, M^{\otimes p^e}\otimes \m_R^{dp^e}) \twoheadrightarrow H^1(X, M^{\otimes p^e} \otimes J_e),
    \\
    H^i(X, M^{\otimes p^e}\otimes \m_R^{dp^e}) \overset{\cong}{\to} H^i(X, M^{\otimes p^e} \otimes J_e) \mbox{ for } i \geq 2.
    \end{array}
\end{equation*}
and hence $H^i(X,M^{\otimes p^e}\otimes J_e) = 0$ for $i > 0$ and $e \gg 1$.  Thus we have
\begin{equation}
\label{eq:downstairs}
    \lim_{e \to \infty} \frac{1}{p^{en}}h^0(X,M^{\otimes p^e}\otimes J_e) = \frac{1}{n!}\vol_X(M) - \lim_{e \to \infty} \frac{1}{p^{ne}} \ell(R/J_e).
\end{equation}
On the other hand, since $X$ is strongly $F$-regular, so too is $Y$ and it follows from the proof of the positivity of the $F$-signature that there is some $e_0$ with $I_e^S \subseteq \m_S^{p^{e - e_0}}$ for all $e \gg 1$ (\cite[Theorem~3.21]{BlickleSchwedeTuckerFSigPairs1}, \textit{cf.} \cite[Section 5 and the Second proof of Theorem 5.1]{PolstraTuckerCombined}).  We have the following relations
\begin{equation*}
    \begin{array}{l}
    H^1(Y, \pi^*M^{\otimes p^e}\otimes I_e^S) \twoheadrightarrow H^1(Y, \pi^*M^{\otimes p^e} \otimes \m_S^{p^{e-e_0}}),
    \\
    H^i(Y, \pi^*M^{\otimes p^e}\otimes I_e^S) \overset{\cong}{\to} H^i(Y, \pi^*M^{\otimes p^e}) \mbox{ for } i \geq 2.
    \end{array}
\end{equation*}
Since $h^i(Y, \pi^*M^{\otimes p^e}) = O(p^{e(n-1)})$ for $i > 0$ as $\pi^*M$ is nef \cite[Theorem~1.4.40]{LazarsfeldPositivity1}, we have that
\begin{equation*}
\begin{array}{l}
\displaystyle
    \limsup_{e\to \infty} \frac{1}{p^{en}} h^0(Y, \pi^*M^{\otimes p^e}\otimes I_e^S) \\ \displaystyle = \frac{1}{n!}\vol_Y(\pi^*M) - s(\OO_{Y,y})+ \limsup_{e \to \infty} \frac{1}{p^{en}} h^1(Y, \pi^*M^{\otimes p^e}\otimes I_e^S)\\ \displaystyle \geq \frac{1}{n!}\vol_X(M) -  s(\OO_{Y,y}) + \limsup_{e \to \infty} \frac{1}{p^{en}} h^1(Y, \pi^*M^{\otimes p^e}\otimes \m_S^{p^{e-e_0}}).
\end{array}
\end{equation*}
By \autoref{lem:makeH1growbasic} applied with $L = \pi^*M^{\otimes p^{e_0}}$, there exists an $\epsilon > 0$ so that
\[
h^1(Y, \pi^*M^{\otimes p^e}\otimes \m_S^{p^{e-e_0}}) = h^1(Y, L^{\otimes(p^{e-e_0})}\otimes \m_S^{p^{e-e_0}}) \geq \epsilon p^{(e - e_0)n}
\]
for all $e \gg 1,$
so that
\begin{equation}
    \label{eq:upstairs}
    \limsup_{e\to \infty} \frac{1}{p^{en}} h^0(Y, \pi^*M^{\otimes p^e}\otimes I_e^S) \geq \frac{1}{n!}\vol_X(M)-  s(\OO_{Y,y}) +\frac{\epsilon}{p^{ne_0}}.
\end{equation}
Observe that $\pi_* I_e^S \subseteq \pi_* \O_Y = \O_X$ and so $J_e = \pi_* I_e^S$ which implies that
\[
\limsup_{e\to \infty} \frac{1}{p^{en}} h^0(Y, \pi^*M^{\otimes p^e}\otimes I_e^S) = \lim_{e \to \infty} \frac{1}{p^{en}}h^0(X,M^{\otimes p^e}\otimes J_e).
\]
Thus combining \eqref{eq:upstairs} and \eqref{eq:downstairs} we have
\[
\frac{1}{n!}\vol_X(M) - \lim_{e \to \infty} \frac{1}{p^{ne}} \ell(R/J_e) \geq \frac{1}{n!}\vol_X(M)-  s(\OO_{Y,y}) +\frac{\epsilon}{p^{ne_0}}
\]
whence it follows
\begin{equation*}
   s(\OO_{Y,y}) \geq \lim_{e \to \infty} \frac{1}{p^{ne}} \ell(R/J_e) + \frac{\epsilon}{p^{ne_0}} > s(\OO_{X,x}).
\end{equation*}
This completes the proof.
\end{proof}

\section{Examples of prime characteristic invariants and blow-ups of isolated singularities}
\label{sec.Examples}

In this section we observe that, without the hypothesis (a) or (b), the conclusion of Theorem~\ref{thm:secondgoal} may not hold even if $\pi: Y\to X$ is the blow-up of an isolated singularity. We provide several examples demonstrating various negative behaviors. We fix the following notation for all of our examples:  $X$ will be an affine scheme of a strongly $F$-regular hypersurface. Specifically, $X=\Spec(R)$, $R={\boldsymbol{k}}[x_1,\ldots,x_n]/(f)$, ${\boldsymbol{k}}$ will be an algebraically closed field of prime characteristic $p>0$, and $X$ will have isolated singularity at the origin $(x_1,\ldots,x_n)$. We denote by $\pi: Y\to X$ the blow-up of $X$ at the origin. Then $\pi$ is proper, birational, and has $n$ standard affine charts:
\[
Y_i=\Spec\left(R\left[\frac{x_1}{x_i},\ldots, \frac{x_n}{x_i}\right]\right)\cong \Spec\left(\frac{k\left[\frac{x_1}{x_i},\ldots, x_i,\ldots \frac{x_n}{x_i}\right]}{(f:x_i^\infty)}\right)
\]
where $(f:x_i^\infty)=\bigcup_{\ell\in \NN}\{g\in k\left[\frac{x_1}{x_i},\ldots, x_i,\ldots \frac{x_n}{x_i}\right]\mid x_i^\ell g\in (f)\}$.

Our strategy of showing $F$-signature can strictly decrease under the blow-up of an isolated singularity avoids any technical computations or explicit formulas of $F$-signature. Instead, we show a strongly $F$-regular isolated singularity can be blown-up to create a variety which has non-strongly $F$-regular points.  We first discuss a method of determining if an isolated hypersurface singularity is strongly $F$-regular.

\begin{lemma}\label{lem: criterion for strong F-regularity} Let ${\boldsymbol{k}}$ be an $F$-finite field of prime characteristic $p>0$, $S={\boldsymbol{k}}[x_1,\ldots,x_n]$, and $f\in S$ an element such that $S/(f)$ is a domain with isolated singularity at the maximal ideal $(x_1,\ldots,x_n)$. Then $S/(f)$ is strongly $F$-regular if and only if there exists $e\in \NN$ such that $x_1 f^{p^e-1}\not \in (x_1^{p^e},\ldots ,x_n^{p^e})$.
\end{lemma}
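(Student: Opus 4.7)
The plan is to apply a Fedder-type criterion for strong $F$-regularity to the hypersurface $R = S/(f)$, using the coordinate $x_1$ as a test element. The starting point is the standard characterization: $R$ is strongly $F$-regular if and only if for every nonzero $c \in R$ there exists $e \geq 1$ such that the $R$-linear map $R \to F^e_* R$ sending $1 \mapsto F^e_* c$ splits; moreover, whenever $c$ is a nonzerodivisor on $R$ with $R_c$ strongly $F$-regular, the theory of test elements shows that it suffices to produce a single such $e$ for that one $c$. One first checks that $c = x_1$ qualifies under the hypothesis that $R$ is a domain with singular locus contained in $\{\m\}$, where $\m = (x_1,\ldots,x_n)$: since $x_1 \in \m$, the localization $R_{x_1}$ inverts a function vanishing at the only possible singular point, so $R_{x_1}$ is regular; and $x_1$ is a nonzerodivisor since $x_1 \notin (f)$ (if $f$ were a unit multiple of $x_1$, $R$ would be regular and the lemma is trivial in both directions).

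The second step makes the splitting condition concrete via Fedder's isomorphism. Writing $\Phi_0 \colon F^e_* S \to S$ for the standard Frobenius trace on the regular ring $S$, there is a natural identification
\[
\Hom_R(F^e_* R, R) \;\cong\; F^e_*\bigl((f^{p^e-1})/(f^{p^e})\bigr)
\]
of $F^e_* R$-modules, under which a typical $\phi$ acts as $F^e_*\bar{a} \mapsto \Phi_0\bigl(F^e_*(asf^{p^e-1})\bigr) \bmod (f)$ for some $s \in S$. Therefore the map $1 \mapsto F^e_* \bar{x}_1$ splits if and only if there is some $s \in S$ with $\Phi_0\bigl(F^e_*(sx_1 f^{p^e-1})\bigr) \notin \m$.

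The final step uses the defining property of $\Phi_0$: it is the $S$-linear projection onto the free summand of $F^e_* S$ spanned by $F^e_*(x_1^{p^e-1}\cdots x_n^{p^e-1})$ in the basis $\{F^e_* x^{\alpha}\}_{0 \leq \alpha_i < p^e}$ (extended in the $F$-finite setting by a ${\boldsymbol{k}}$-basis of $F^e_* {\boldsymbol{k}}$). From this one reads off that for any $g \in S$, some $\Phi_0(F^e_*(sg))$ lies outside $\m$ if and only if $g \notin \m^{[p^e]}$: if $g \in \m^{[p^e]}$ then so is $sg$ for every $s$, and $\Phi_0(F^e_* \m^{[p^e]}) \subseteq \m$; conversely, if $g$ has a monomial $x^{\delta}$ with all $\delta_i \leq p^e-1$, then taking $s = x_1^{p^e-1-\delta_1}\cdots x_n^{p^e-1-\delta_n}$ produces a nonzero coefficient of $x_1^{p^e-1}\cdots x_n^{p^e-1}$ in $sg$, so $\Phi_0(F^e_*(sg))$ has nonzero constant term. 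Applied to $g = x_1 f^{p^e-1}$, this yields the claimed criterion. The main conceptual content is Fedder's isomorphism in the second step; the reduction to a single test element and the monomial bookkeeping for $\Phi_0$ are routine once that is in hand.
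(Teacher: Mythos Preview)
Your argument is correct and lands on the same Fedder-type endpoint as the paper, but the reduction to the single element $x_1$ is organized differently. The paper localizes at $\m$ at the outset and invokes the Aberbach--Enescu splitting prime
\[
\mathcal{P}=\bigcap_{e}\{c\in R_\m : 1\mapsto F^e_*c \text{ does not split}\};
\]
since $R_\m$ has an isolated singularity, $\mathcal{P}$ is forced to be $0$, $\m R_\m$, or $R_\m$, so strong $F$-regularity is equivalent to $x_1\notin\mathcal{P}$, and the translation to $x_1 f^{p^e-1}\notin\m^{[p^e]}$ is then cited from Glassbrenner. You instead use the test-element principle directly (one $c$ with $R_c$ regular suffices) and carry out Fedder's computation by hand via the trace $\Phi_0$. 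Your route is more self-contained and avoids the structural input about $\mathcal{P}$; the paper's route packages the reduction into a single citation but relies on that extra machinery.

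One small imprecision: your equivalence ``$1\mapsto F^e_*\bar{x}_1$ splits iff some $\Phi_0(F^e_*(sx_1 f^{p^e-1}))\notin\m$'' is literally the \emph{local} splitting criterion at $\m$; for the global map over the non-local ring $R$ one needs the image to equal $1$, not merely to lie outside $\m$. This does no harm here because $R$ is strongly $F$-regular iff $R_\m$ is (the other points being regular), but it would be cleaner to say so, or simply to localize at $\m$ from the start as the paper does.
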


\begin{proof}
The property of being strongly $F$-regular is a local condition. Let $\m=(x_1,\ldots, x_n)$. Then $R$ is strongly $F$-regular if and only if $R_\m$ is a strongly $F$-regular local ring. By \cite{AberbachEnescuStructureOfFPure} the set
\[
\mathcal{P}=\bigcap_{e\in\NN}\{c\in R_\m\mid R^{1/p^e}_\m\xrightarrow{\cdot c^{1/p^e}}R^{1/p^e}_\m\mbox{ does not split}\}
\]
is an ideal of $R_\m$ satisfying the following:
\begin{enumerate}
\item $R_\m$ is $F$-pure if and only if $\mathcal{P}\not = R_\m$;
\item If $R_\m$ is $F$-pure  then $\mathcal{P}$ is a prime ideal;
\item $R_\m$ is not strongly $F$-regular then the closed set $V(\mathcal{P})$ of $\Spec(R_\m)$ defines the non-strongly F-regular locus of $R_\m$.
\end{enumerate}
Thus the assumption that $R$ has isolated singularity implies that $\mathcal{P}$ is $0$ if $R$ is strongly $F$-regular, the unique maximal ideal of $R_\m$ if $R$ is $F$-pure but not strongly $F$-regular, or all of $R_\m$ if $R$ is not $F$-pure. Therefore $R_\m$ is strongly $F$-regular if and only if $x_1\not\in \mathcal{P}$.  It readily follows by the techniques of \cite{FedderFPureRat} that $x_1\not\in \mathcal{P}$ if and only if there exists $e\in \NN$ such that $x_1f^{p^{e}-1}\not \in (x_1^{p^e},\ldots, x_n^{p^e})S_\m$ (c.f. \cite[Theorem~2.3]{GlassbrennerSFRinImagesOfRegular}). Since $(x_1^{p^e},\ldots, x_n^{p^e})S$ is primary to $\m$ we have $x_1f^{p^{e}-1}\not \in (x_1^{p^e},\ldots, x_n^{p^e})S_\m$ if and only if $x_1f^{p^{e}-1}\not \in (x_1^{p^e},\ldots, x_n^{p^e})S$.
\end{proof}

\begin{example}\label{example: first example} Let
\[
R=\frac{{\boldsymbol{k}}[x_1,x_2,x_3,x_4]}{(x_1^2+x_2^4+x_3^5+x_4^4)}
\]
and assume that ${\boldsymbol{k}}$ is an algebraically closed field of characteristic $7$. Then $z(x_1^2+x_2^4+x_3^5+x_4^4)^{6}\not \in (x_1^7,x_2^7,x_3^7,x_4^7)$ and therefore $R$ is strongly $F$-regular by Lemma~\ref{lem: criterion for strong F-regularity}. The chart $Y_1$ is non-singular. The charts $Y_2$ and $Y_4$ are isomorphic and have coordinate rings isomorphic to the hypersurface
\[
S=\frac{{\boldsymbol{k}}[a,b,c,d]}{(a^2+b^2+c^5b^3+d^4b^2)}.
\]
The hypersurface $S$ is not normal at the point $(a,b,c,d)$, in particular is not strongly F-regular, but is $F$-pure since $(a^2+b^2+c^5b^3+d^4b^2)^6\not\in (a^7,b^7,c^7,d^7)$. The remaining chart has coordinate ring isomorphic to
\[
\frac{{\boldsymbol{k}}[a,b,c,d]}{(a^2+b^2+c^5b^3+d^4b^2)},
\]
a ring which is neither normal nor F-pure.

Observe that $R_\m$ is  a local ring of multiplicity $2$. In particular $\e(R_\m)+s(R_\m)=2$, see the proof of \cite[Proposition~4.22]{TuckerFSigExists} for a justification. The same holds for the three singular charts of the blow-up. In particular, not only does the $F$-signature strictly decrease to $0$ on points in the exceptional locus of $\pi:Y\to X$, but the Hilbert-Kunz multiplicity of these points has strictly increased to $2$.

We leave it to the reader to verify that if $\widetilde{Y}\to Y$ is the normalization of $Y$, i.e., $\widetilde{Y}\to X$ is the normalized blow-up of $X$ at the origin, then $\widetilde{Y}$ is non-singular and in particular the conclusion of Theorem~\ref{thm:secondgoal} is valid for the proper birational morphism $\widetilde{Y}\to X$. This is not an indication that the conclusion of Theorem~\ref{thm:secondgoal} is valid for normalized blow-ups of isolated strongly $F$-regular singularities by the following examples.

\end{example}

\begin{example}\label{example: second example}

Let
\[
R=\frac{{\boldsymbol{k}}[x_1,x_2,x_3,x_4]}{(x_1^2+x_2^3+x_3^6+x_4^6)}
\]
where ${\boldsymbol{k}}$ is an algebraically closed field of characteristic $7$. Then $R$ is strongly $F$-regular but the affine chart $Y_4$ of the blow-up has coordinate ring isomorphic to
\[
\frac{{\boldsymbol{k}}[a,b,c,d]}{(a^2+b^3d+c^6d^4+d^4)}
\]
which is normal, $F$-pure, but is not strongly $F$-regular.

\end{example}

Our final example illustrates that the normalized blow-up of an isolated F-regular singularity can produce a normal variety with non-F-pure points. The example is obtained by changing the characteristic of the base field from Example~\ref{example: second example}.

\begin{example}\label{example: third example}
Let
\[
R=\frac{{\boldsymbol{k}}[x_1,x_2,x_3,x_4]}{(x_1^2+x_2^3+x_3^6+x_4^6)}
\]
where ${\boldsymbol{k}}$ is an algebraically closed field of characteristic $11$. Then $R$ is strongly $F$-regular but the affine chart $Y_4$ of the blow-up has coordinate ring isomorphic to
\[
\frac{{\boldsymbol{k}}[a,b,c,d]}{(a^2+b^3d+c^6d^4+d^4)}
\]
which is normal but not $F$-pure.

\end{example}

\section{Further questions}

We conclude the paper by stating two open questions.  First, we hope that Hilbert-Kunz multiplicity can also be controlled under certain blowups.

\begin{question}
Can we control the Hilbert-Kunz multiplicity of a local ring $(R, \fram)$ under (special) blowups $\pi : Y \to X = \Spec R$?
\end{question}

Second, we would like to generalize the results of \autoref{sec.FinitelyGeneratedCanonical} to the case when the ring $\bigoplus_{i \geq 0} R(iK_X)$ or $\bigoplus_{i \geq 0} R(-iK_X)$ is finitely generated, instead of generated in degree 1.  Note that we expect that for any strongly $F$-regular ring and any Weil divisor $D$, the ring $\bigoplus_{i \geq 0} R(iD)$ is finitely generated, this would hold for instance if the minimal model program is known to hold in characteristic $p > 0$ and hence we know it if $\dim R = 3$ and $p \geq 5$, see \cite[Theorem~1.3]{BirkarMMPCharp} and also \cite[Theorem~4.3]{SchwedeSmithLogFanoVsGloballyFRegular} applied locally.

\begin{question}
If $R$ is a strongly $F$-regular local ring and $S = \bigoplus_{i \geq 0} R(iK_X)$ (respectively, $S = \bigoplus_{i \geq 0} R(-iK_X)$) is finitely generated, can we control the $F$-signature of $\Proj S$?
\end{question}

\bibliographystyle{alpha}
\bibliography{MainBib}

\def\cfudot#1{\ifmmode\setbox7\hbox{$\accent"5E#1$}\else
  \setbox7\hbox{\accent"5E#1}\penalty 10000\relax\fi\raise 1\ht7
  \hbox{\raise.1ex\hbox to 1\wd7{\hss.\hss}}\penalty 10000 \hskip-1\wd7\penalty
  10000\box7}
\begin{thebibliography}{{Mur}18}

\bibitem[AE05]{AberbachEnescuStructureOfFPure}
Ian~M. Aberbach and Florian Enescu.
\newblock The structure of {$F$}-pure rings.
\newblock {\em Math. Z.}, 250(4):791--806, 2005.

\bibitem[AL03]{AberbachLeuschke}
Ian~M. Aberbach and Graham~J. Leuschke.
\newblock The {$F$}-signature and strong {$F$}-regularity.
\newblock {\em Math. Res. Lett.}, 10(1):51--56, 2003.

\bibitem[BGO17]{BhattGabberOlssonFiniteness}
B.~{Bhatt}, O.~{Gabber}, and M.~{Olsson}.
\newblock {Finiteness of $\backslash$'etale fundamental groups by reduction
  modulo $p$}.
\newblock {\em ArXiv e-prints}, May 2017.

\bibitem[Bir16]{BirkarMMPCharp}
Caucher Birkar.
\newblock Existence of flips and minimal models for 3-folds in char {$p$}.
\newblock {\em Ann. Sci. \'Ec. Norm. Sup\'er. (4)}, 49(1):169--212, 2016.

\bibitem[BS12]{BlickleSchwedeSurveyPMinusE}
Manuel Blickle and Karl Schwede.
\newblock $p^{-1}$-linear maps in algebra and geometry.
\newblock 2012.
\newblock arXiv:1205.4577.

\bibitem[BST11]{BlickleSchwedeTuckerFSigPairs1}
Manuel Blickle, Karl Schwede, and Kevin Tucker.
\newblock {$F$}-signature of pairs and the asymptotic behavior of {F}robenius
  splittings.
\newblock 2011.
\newblock arXiv:1107.1082.

\bibitem[BV88]{BrunsVetterDeterminantal}
Winfried Bruns and Udo Vetter.
\newblock {\em Determinantal rings}, volume 1327 of {\em Lecture Notes in
  Mathematics}.
\newblock Springer-Verlag, Berlin, 1988.

\bibitem[CST16]{CaravjalSchwedeTuckerFundamentalGroups}
J.~{Carvajal-Rojas}, K.~{Schwede}, and K.~{Tucker}.
\newblock {Fundamental groups of $F$-regular singularities via $F$-signature}.
\newblock {\em ArXiv e-prints}, June 2016.
\newblock arXiv:1606.04088, to appear in Ann. Sci. \'Ecole Norm. Sup.

\bibitem[dFKL07]{DeFernexKuronyaLazarsfeldHigherCohomology}
Tommaso de~Fernex, Alex K{\" u}ronya, and Robert Lazarsfeld.
\newblock Higher cohomology of divisors on a projective variety.
\newblock {\em Math. Ann.}, 337(2):443--455, 2007.

\bibitem[dJ96]{deJongAlterations}
A.~J. de~Jong.
\newblock Smoothness, semi-stability and alterations.
\newblock {\em Inst. Hautes \'Etudes Sci. Publ. Math.}, (83):51--93, 1996.

\bibitem[DPY16]{DeStefaniPolstraYaoGlobalizing}
A.~{De Stefani}, T.~{Polstra}, and Y.~{Yao}.
\newblock {Globalizing F-invariants}.
\newblock {\em ArXiv e-prints}, August 2016.

\bibitem[Fed83]{FedderFPureRat}
Richard Fedder.
\newblock {$F$}-purity and rational singularity.
\newblock {\em Trans. Amer. Math. Soc.}, 278(2):461--480, 1983.

\bibitem[Gla96]{GlassbrennerSFRinImagesOfRegular}
Donna Glassbrenner.
\newblock Strongly {F}-regularity in images of regular rings.
\newblock {\em Proceedings of the American Mathematical Society}, 124(2):345 --
  353, 1996.

\bibitem[HL02]{HunekeLeuschkeTwoTheoremsAboutMaximal}
Craig Huneke and Graham~J. Leuschke.
\newblock Two theorems about maximal {C}ohen-{M}acaulay modules.
\newblock {\em Math. Ann.}, 324(2):391--404, 2002.

\bibitem[HS06]{HunekeSwansonIntegralClosure}
Craig Huneke and Irena Swanson.
\newblock {\em Integral closure of ideals, rings, and modules}, volume 336 of
  {\em London Mathematical Society Lecture Note Series}.
\newblock Cambridge University Press, Cambridge, 2006.

\bibitem[HY03]{HaraYoshidaGeneralizationOfTightClosure}
Nobuo Hara and Ken-Ichi Yoshida.
\newblock A generalization of tight closure and multiplier ideals.
\newblock {\em Trans. Amer. Math. Soc.}, 355(8):3143--3174 (electronic), 2003.

\bibitem[Kun69]{KunzCharacterizationsOfRegularLocalRings}
Ernst Kunz.
\newblock Characterizations of regular local rings for characteristic {$p$}.
\newblock {\em Amer. J. Math.}, 91:772--784, 1969.

\bibitem[Laz04]{LazarsfeldPositivity1}
Robert Lazarsfeld.
\newblock {\em Positivity in algebraic geometry. {I}}, volume~48 of {\em
  Ergebnisse der Mathematik und ihrer Grenzgebiete. 3. Folge. A Series of
  Modern Surveys in Mathematics [Results in Mathematics and Related Areas. 3rd
  Series. A Series of Modern Surveys in Mathematics]}.
\newblock Springer-Verlag, Berlin, 2004.
\newblock Classical setting: line bundles and linear series.

\bibitem[Liu18]{LiuFVolume}
Yuchen Liu.
\newblock The {F}-volume of singularities in positive characteristic.
\newblock 2018.
\newblock In preparation.

\bibitem[LLX18]{LiLiuXuGuidedNormalizedVolume}
C.~{Li}, Y.~{Liu}, and C.~{Xu}.
\newblock {A Guided Tour to Normalized Volume}.
\newblock {\em ArXiv e-prints}, June 2018.

\bibitem[LM09]{LazarsfeldMustataConvexBodiesLinearSeries}
Robert Lazarsfeld and Mircea Musta{\c t}{\u a}.
\newblock Convex bodies associated to linear series.
\newblock {\em Ann. Sci. \'Ec. Norm. Sup\'er. (4)}, 42(5):783--835, 2009.

\bibitem[LX17]{LiuXuKStabilityOfCubicThreefolds}
Y.~{Liu} and C.~{Xu}.
\newblock {K-stability of cubic threefolds}.
\newblock {\em ArXiv e-prints}, June 2017.

\bibitem[{Mur}18]{MurayamaGammaConstruction}
T.~{Murayama}.
\newblock {The gamma construction and asymptotic invariants of line bundles
  over arbitrary fields}.
\newblock {\em ArXiv e-prints}, September 2018.

\bibitem[Mus13]{MustataNonNefLocusPositiveChar}
Mircea Musta{\c t}{\u a}.
\newblock The non-nef locus in positive characteristic.
\newblock In {\em A celebration of algebraic geometry}, volume~18 of {\em Clay
  Math. Proc.}, pages 535--551. Amer. Math. Soc., Providence, RI, 2013.

\bibitem[Pol18]{PolstraLower}
Thomas Polstra.
\newblock Uniform bounds in {F}-finite rings and lower semi-continuity of the
  {F}-signature.
\newblock {\em Trans. Amer. Math. Soc.}, 370(5):3147--3169, 2018.

\bibitem[PT18]{PolstraTuckerCombined}
Thomas Polstra and Kevin Tucker.
\newblock {$F$}-signature and {H}ilbert-{K}unz multiplicity: a combined
  approach and comparison.
\newblock {\em Algebra Number Theory}, 12(1):61--97, 2018.

\bibitem[San15]{SannaiOnDualFSignature}
Akiyoshi Sannai.
\newblock On dual {$F$}-signature.
\newblock {\em International Mathematics Research Notices}, 2015(1):197--211,
  2015.

\bibitem[SS10]{SchwedeSmithLogFanoVsGloballyFRegular}
Karl Schwede and Karen~E. Smith.
\newblock Globally {$F$}-regular and log {F}ano varieties.
\newblock {\em Adv. Math.}, 224(3):863--894, 2010.

\bibitem[SVdB97]{SmithVanDenBerghSimplicityOfDiff}
Karen~E. Smith and Michel Van~den Bergh.
\newblock Simplicity of rings of differential operators in prime
  characteristic.
\newblock {\em Proc. London Math. Soc. (3)}, 75(1):32--62, 1997.

\bibitem[Tuc12]{TuckerFSigExists}
Kevin Tucker.
\newblock {$F$}-signature exists.
\newblock {\em Invent. Math.}, 190(3):743--765, 2012.

\bibitem[Xu14]{XuFinitenessOfFundGroups}
Chenyang Xu.
\newblock Finiteness of algebraic fundamental groups.
\newblock {\em Compos. Math.}, 150(3):409--414, 2014.

\bibitem[Yao06]{YaoObservationsAboutTheFSignature}
Yongwei Yao.
\newblock Observations on the {$F$}-signature of local rings of characteristic
  {$p$}.
\newblock {\em J. Algebra}, 299(1):198--218, 2006.

\end{thebibliography}
\end{document}